\def\e{\mathrm{e}}
\def\ve{\varepsilon}
\def\la{\lambda}
\def\me{\mathsf{e}}
\def\mv{\mathsf{v}}
\def\mE{\mathsf{E}}
\def\mV{\mathsf{V}}
\def\mG{\mathsf{G}}
\def\ea{\EuFrak{a}}
\def\ve{\varepsilon}
\def\diag{\mathrm{diag}}
\def\comp{\mathbb{C}}
\def\real{\mathbb{R}}
\def\nat{\mathbb{N}}
\def\mcA{\mathcal{A}}
\def\mcB{\mathscr{B}}
\def\mcV{\mathcal{V}}
\def\mcS{\mathcal{S}}
\def\kF{\kappa_{\scaleto{\widetilde{F}}{4.5pt}}}
\def\kG{\kappa_{\scaleto{G}{3.3pt}}}
\def\dx{\, dx}
\def\dy{\, dy}
\def\dt{\, dt}
\def\ds{\, ds}
\newtheorem{theo}{Theorem}
\newtheorem{lemma}[theo]{Lemma}
\newtheorem{prop}[theo]{Proposition}
\newtheorem{cor}[theo]{Corollary}
\newtheorem{defi}[theo]{Definition}
\newtheorem{asum}[theo]{Assumption}
\newtheorem{assum}[theo]{Assumptions}
\newtheorem{rem}[theo]{Remark}
\newtheorem{notat}[theo]{Notation}
\numberwithin{equation}{section} \numberwithin{theo}{section}
\begin{document}

\title{Reaction-diffusion equations on metric graphs with edge noise}

\author{Eszter Sikolya}
\address{Department of Applied Analysis and Computational Mathematics\\
E\"otv\"os Lor\'and University\\
Budapest, Hungary}
\email{eszter.sikolya@ttk.elte.hu}
\thanks{The author was supported by the OTKA grant no.~135241.}

\date{\today}

\subjclass[2010]{Primary: 60H15, 35R02, Secondary: 35R60, 47D06}
\keywords{Metric graph; reaction-diffusion equation; Wiener-type noise}

\begin{abstract}
We investigate stochastic reaction-diffusion equations on finite metric graphs. On each edge in the graph a multiplicative cylindrical Gaussian noise driven reaction-diffusion equation is given.  The vertex conditions are the standard continuity and generalized, non-local Neumann--Kirchhoff-type law in each vertex. The reaction term on each edge is assumed to be an odd degree polynomial, not necessarily of the same degree on each edge, with possibly stochastic coefficients and negative leading term. The model is a generalization of the problem in \cite{KS21} where polynomials with much more restrictive assumptions are considered and no first order differential operator is involved. We utilize the semigroup approach from \cite{KS21JEE} to obtain existence and uniqueness of solutions with sample paths in the space of continuous functions on the graph. 
\end{abstract}

\maketitle

\pagestyle{plain}

%%%%%%%%%%%%%%%%%%%%%%%%%%%%%%%%%%%%%%%%%%%%
\section{Introduction}
%%%%%%%%%%%%%%%%%%%%%%%%%%%%%%%%%%%%%%%%%%%%%

Throughout the paper $\mG$ denotes a finite metric graph. Our terminology follows \cite[Chap.~1]{BeKu}, we list here only the most important concepts. The graph $\mG$ consists of a finite set of vertices $\mV = \{\mv\}$ and a finite set $\mE = \{\me\}$ of edges connecting the vertices. We denote by $m=|\mE|$ the number of edges and by  $n=|\mV|$ the number of vertices. In general, a metric graph is assumed to have directed edges; that is edges having an origin and a terminal vertex. In our case, dealing with self-adjoint operators in the deterministic part of our model, we can just consider undirected edges. Each edge is assigned a positive length $\ell_{\me}\in (0,+\infty)$, and we denote by $x\in [0,\ell_{\me}]$ a coordinate of $\mG$. We assume that $\mG$ is simple; that is, there are no multiple edges connecting two vertices, and there are no loops at any of the vertices in $\mG$. 

The metric graph structure enables us to speak about functions $u$ on $\mG$, defined along the edges such that for any coordinate $x$, the function takes its value $u(x)$. If we emphasize that $x$ is taken from the edge $\me$, we write $u_{\me}(x)$. Hence, a function $u$ on $\mG$ can be regarded as a vector of functions that are defined on the edges, therefore we will also write
\[u=\left(u_{\me}\right)_{\me\in \mE},\]
and consider it as an element of a product function space.

To write down the vertex conditions in the form of equations, for a given function $u$ on $\mG$ and for each $\mv\in\mV$, we introduce the following notation. For any $\mv\in\mV$, we denote by $\mE_{\mv}$ the set of edges incident to the vertex $\mv$, and by $d_{\mv}=|\mE_{\mv}|$ the degree of $\mv$. Let $u_{\me}(\mv)$ denote the value of $u$ in $\mv$ along the edge $\me$, in the case $\me\in \mE_{\mv}$. Let $\mE_{\mv}=\{\me_1,\dots ,\me_{d_{\mv}}\}$, and define
\begin{equation}\label{eq:Fv}
U(\mv)=\left(u_{\me}(\mv)\right)_{\me\in \mE_{\mv}}=\begin{pmatrix}
	u_{\me_1}(\mv)\\
	\vdots\\
	u_{\me_{d_{\mv}}}(\mv)
\end{pmatrix}\in \real^{d_{\mv}},
\end{equation}
the vector of the function values in the vertex $\mv$. 

Let $I_{\mv}$ be the bi-diagonal matrix
\begin{equation}\label{eq:Iv}
I_{\mv}=\begin{pmatrix}
	1 & -1 & & \\
	& \ddots & \ddots & \\
	& & 1 & -1 
	\end{pmatrix}\in\real^{(d_{\mv}-1)\times d_{\mv}}.
\end{equation}
It is easy to see that if we set
\begin{equation}\label{eq:contv}
I_{\mv}U(\mv)=0_{\real^{d_{\mv}-1}},
\end{equation} 
this means that all the function values coincide in $\mv$. If this is the case, we simply write $u(\mv)$ for this common vertex value. 

\begin{defi}\label{defi:contG}
If the function $u$ is continuous on the edges, that is, $u_{\me}$ is continuous on $\me$ for each $\me\in\mE$ and \eqref{eq:contv} is satisfied for each vertex $\mv\in\mV$, then we call the function $u$ \emph{continuous on $\mG$}. 
\end{defi}

We define the operator with domain consisting of continuous functions on $\mG$ as
\begin{equation}\label{eq:opL}
\begin{split}
D(L) &\coloneqq \left\{u\in\prod_{\me\in\mE}C[0,\ell_{\me}]\colon I_{\mv}U(\mv)=0,\;\mv\in\mV\right\},\\
Lu &\coloneqq \left(u(\mv)\right)_{\mv\in\mV}\in\real^{n}\text{ for }u\in D(L).
\end{split}
\end{equation} 
That is, $L$ assigns to each function $u$ that is continuous on $\mG$ the ($n$ dimensional) vector of the vertex values of $u$.\medskip

For $T>0$ given we consider the stochastic system written formally as
\begin{equation}\label{eq:stochnet}
\left\{\begin{aligned}
\dot{u}_{\me}(t,x) & =  (c_{\me} u_{\me}')'(t,x)+d_{\me}(x)\cdot u_{\me}'(t,x)&&\\
&-p_{\me}(x) u_{\me}(t,x)+f_{\me}(t,x,u_{\me}(t,x))&&\\
&+ h_{\me}(t,x,u_{\me}(t,x))\frac{\partial w_{\me}}{\partial t}(t,x), &x\in (0,\ell_{\me}),\; t\in(0,T],\; \me\in\mE,\;\; & (a)\\
0 & = I_{\mv}U(t,\mv),\;   &t\in(0,T],\; \mv\in\mV,\;\;& (b)\\
0 & = M L u(t)+ Cu(t), & t\in(0,T],\;\;& (c)\\
u_{\me}(0,x) & =  u_{0,\me}(x), &x\in [0,\ell_{\me}],\; \me\in\mE.\;\;& (d)
\end{aligned}
\right.
\end{equation}
Here $\dot{u}_{\me}$ and $u_{\me}'$ denote the time and space derivatives, respectively, of $u_{\me}$.

The functions $c_{\me}$ are (variable) diffusion coefficients or conductances, and we assume that
\[0<c_{\me}\in C[0,\ell_{\me}],\quad  \me\in\mE.\]

We assume
\begin{equation}%\label{eq:}
d_{\me}\in \mathrm{Lip}[0,\ell_{\me}],\quad \me\in\mE.
\end{equation}

The functions $p_{\me}$ are nonnegative, bounded functions, hence
\begin{equation}\label{eq:pe}
0\leq p_{\me}\in L^{\infty}(0,\ell_{\me}),\quad \me\in\mE.
\end{equation}

The reaction terms $f_{\me}$  are assumed to be odd degree polynomials, with possible different degree on different edges, and with possibly stochastic coefficients and negative leading term, see \eqref{eq:fjdef}. The coefficients $h_{\me}$ are assumed to be locally Lipschitz continuous and satisfy appropriate growths conditions \eqref{eq:hjdefnov}, depending on the maximum and minimum degrees of the polynomials $f_{\me}$ on the edges. These become linear growth conditions when the degrees of the polynomials $f_{\me}$ on the different edges coincide. The $(w_{\me}(t))_{t\in [0,T]}$ are independent cylindrical Wiener-processes defined in the Hilbert spaces $L^2(0,\ell_{\me})$, $\me\in\mE$. 

For each $t\in [0,T]$, by $u(t)$ we mean
\[u(t)=u(t,\cdot)=\left(u_{\me}(t,\cdot)\right)_{\me\in\mE}\]
the function on $\mG$ defined as $u_{\me}(t,\cdot)$ on the edge $\me$. 

In $(\ref{eq:netcp}b)$, $0$ denotes the constant $0$ vector of dimension $d_{\mv}-1$ for $\mv\in\mV$, and $U(t,\mv)$ is defined by \eqref{eq:Fv} for the function $u(t)$. Hence, these equations express that $u(t)$ is continuous on $\mG$ for each $t\in (0,T]$, cf.~Definition \ref{defi:contG}.

In $(\ref{eq:stochnet}c)$, $0$ denotes the constant $0$ vector of dimension $n$, $M$ is an $n\times n$ matrix, satisfying certain conditions that we specify later, see Assumption \ref{as:M}, and $L$ is the operator from \eqref{eq:opL}. 

To define the so-called feedback operator $C$, 
%we set -- analogously to \eqref{eq:Fv} --, for each $\mv\in\mV$ and $\mE_{\mv}=\{\me_1,\dots ,\me_{d_{\mv}}\}$ the vector
%\[C(\mv)=\begin{pmatrix}
	%c_{\me_1}(\mv)\\
	%\vdots\\
	%c_{\me_{d_{\mv}}}(\mv)
%\end{pmatrix}\in \real^{d_{\mv}}.\]
%For a differentiable function $f$ defined on $\mG$, we set
%\begin{equation}\label{eq:Fvv}
%F'(\mv)=\left(f'_{\me}(\mv)\right)_{\me\in\mE_{\mv}}=\begin{pmatrix}
	%f'_{\me_1}(\mv)\\
	%\vdots\\
	%f'_{\me_{d_{\mv}}}(\mv)
%\end{pmatrix}\in \real^{d_{\mv}},
%\end{equation}
%the vector of the function derivatives in the vertex $\mv$. We will assume throughout the paper that derivatives are taken in the directions away from the vertex $\mv$ (i.e.~into the edge), see \cite[Sec.~1.4.]{BeKu}.
%Using these notations, 
for each $\mv\in\mV$ we set
\begin{equation}%\label{eq:}
C_{\mv} u\coloneqq\sum_{\me\in\mE_{\mv}}c_{\me}(\mv)\cdot u'_{\me}(\mv),
\end{equation}
where the derivatives are taken in the directions away from the vertex $\mv$ (i.e.~into the edge), see \cite[Sec.~1.4.]{BeKu}.
Let now
\begin{equation}\label{eq:opC}
\begin{split}
D(C) &\coloneqq \prod_{\me\in\mE}C^1[0,\ell_{\me}],\\
Cu &\coloneqq \left(C_{\mv}u \right)_{\mv\in\mV}\in \real^{n}\text{ for }u\in D(C).
\end{split}
\end{equation}
Hence, equation $(\ref{eq:stochnet}c)$ expresses a (generalized) Neumann--Kirchhoff-type condition in the vertices.

It is clear by definition, that $(\ref{eq:stochnet}b)$ consists of 
\begin{equation}\label{eq:conteqnumber}
\sum_{\mv\in\mV}(d_{\mv}-1)=2m-n
\end{equation} 
equations. At the same time, $(\ref{eq:stochnet}c)$ consists of $n$ equations. Hence, we have altogether $2m$ (boundary or vertex) conditions in the vertices.

In equation~$(\ref{eq:stochnet}d)$ we pose the initial conditions on the edges.\medskip

In the literature stochastic (reaction-)diffusion equations on networks are treated e.g.~in \cite{BMZ08}, \cite{BZ14}, \cite{CP17}, \cite{CP17a} and \cite{KS21JEE}. In the first four papers the semigroup approach is utilized in a Hilbert space setting. In the recent work \cite{KS21JEE} a much more general problem is treated with multiplicative Wiener type noise on the edges as well as in the vertices and with locally Lipschitz continuous diffusion coefficients satisfying appropriate growths conditions. This paper uses an entirely different tool-set based on the semigroup approach for stochastic evolution equations in Banach spaces from \cite{Ce03}, \cite{KvN12}, \cite{KvN19} and \cite{vNVW08}. This approach makes also possible to consider polynomial nonlinearities with different degrees on different edges. 

In the current paper we consider a generalization of the stochastic reaction-diffusion problem on a metric graph from \cite{KS21}. In \cite{KS21} the results are proved under rather restrictive conditions on the polynomial nonlinearities, namely, they have the same degree on the edges and their coefficients should be contained in the space $B$ of continuous functions on the graph $\mG$, see \eqref{eq:B}. Now we consider general nonlinearities of odd-degree polynomials, with possible different degree on different edges and with possibly stochastic coefficients, see \eqref{eq:fjdef}. We also add a first-order term in $(\ref{eq:stochnet}a)$, and consider different edge lengths.

Our equations differ from those in \cite{KS21JEE} since we have no dynamics, and correspondingly no noise in the vertices. Hence, in contrary to \cite{KS21JEE}, the state space of the problem will consist of functions on the edges and no boundary space is included. To work in this new setting we have to introduce the product space of continuous functions on the edges $E^c$, see \eqref{eq:Ec} and verify results concerning this space. A crucial point is to prove Proposition \ref{prop:fractionalspaceinclB} claiming the existence of continuous, dense embeddings of the fractional domain spaces of the generators $A_p$ (see \eqref{eq:Ap}) into the space $B$. In contrary to \cite[Lem.~4.2]{KS21JEE}, we do not have isometry here. Techniques and results from \cite{KS21JEE} make us possible to show in Theorem \ref{theo:SCPnsolcontB} that for any initial value from $E^c$ problem \eqref{eq:stochnet} admits a unique mild solution with trajectories in the space $B$ which is a more general result than obtained in \cite[Thm.~3.15]{KS21} for the Allen--Cahn type nonlinearities.

The paper is organized as follows. In Section \ref{sec:determnetwork} we collect known semigroup results for the linear deterministic version of \eqref{eq:stochnet} and generalize them for the case of different edge lengths. Section \ref{sec:srden} contains the core of the paper. In Subsection \ref{sec:srdenabstr} we recall  from \cite{KS21JEE} an abstract result regarding the stochastic abstract Cauchy problem \eqref{eq:SCP} on Banach spaces. In Subsection \ref{sec:srdenprep} we introduce the spaces $E^c$ and $B$ in Definitions \ref{defi:Ec} and \ref{defi:B}, and prove the embedding results Proposition \ref{prop:fractionalspaceinclB} and Corollary \ref{cor:fractionalspaceincl}. Proposition \ref{prop:AonEc} states that the semigroup governing the linear deterministic problem is analytic on $E^c$, while Proposition \ref{prop:AonB} claims that the semigroup is strongly continuous on $B$. In Subsection \ref{subsec:mainresults} we first make the necessary assumptions on the reaction terms in \eqref{eq:fjdef} and on the diffusion coefficients in \eqref{eq:hjdefnov}. We then rewrite \eqref{eq:stochnet} in the form of a stochastic abstract Cauchy problem \eqref{eq:SCPn} and prove the main existence and uniqueness result in Theorem \ref{theo:SCPnsolcontB} and a space-time regularity result in Theorem \ref{theo:Holderreg}.

\begin{notat}
The following notations are used throughout the article with $\ell_{\me}>0$. 
\begin{itemize}
	\item By $C[0,\ell_{\me}]$ we denote the space of continuous functions on the (compact) interval $[0,\ell_{\me}]$, and by $C^1[0,\ell_{\me}]$ we denote such functions from $C[0,\ell_{\me}]$ that are continuously differentiable on $[0,\ell_{\me}]$. These are Banach spaces supplied with the usual maximum-norm. For $\lambda\in (0,1)$, we mean by $C^{\lambda}[0,\ell_{\me}]$ the space of $\lambda$-Hölder-continuous functions with the usual norm.
	\item By $L^p(0,\ell_{\me})$, $1\leq p< \infty$ we denote the Banach space of measurable functions for which the $p$-th power of the absolute value is Lebesgue integrable (where functions which agree almost everywhere are identified), supplied with the usual $L^p$-norm. $L^{\infty}(0,\ell_{\me})$ denotes the space of measurable functions which are bounded almost everywhere, supplied with the supremum-norm.
	\item For $k\in \nat$, $1\leq p< \infty$, $W^{k,p}(0,\ell_{\me})$ is defined as the subset of functions $u$ in $L^{p}(0,\ell_{\me})$ such that $u$ and its weak derivatives up to order $k$ have a finite $L^p$-norm. 
	\[\left\|u\right\|^p_{W^{k,p}(0,\ell_{\me})} \coloneqq \sum_{0\leq j\leq k}\left\|u^{(j)}\right\|^p_{L^p(0,\ell_{\me})},\]
	where $u^{(j)}$ denotes the $j$th derivative of $u$. Let $H^{k}(0,\ell_{\me})\coloneqq W^{k,2}(0,\ell_{\me})$.
	\item By $W_0^{k,p}(0,\ell_{\me})$ we mean the closure of the space of infinitely many times differentiable functions having compact support in $(0,\ell_{\me})$ with respect to the $W^{k,p}(0,\ell_{\me})$-norm. We define $H_0^{k}(0,\ell_{\me})\coloneqq W_0^{k,2}(0,\ell_{\me})$.
	\item For $s>0$, $1\leq p<\infty$ we define
	\[W^{s,p}(0,\ell_{\me})\coloneqq \left\{u\in W^{\left\lfloor s \right\rfloor}(0,\ell_{\me})\colon \left|u^{(\left\lfloor s \right\rfloor)}\right|_{\theta,p,(0,\ell_{\me})}<\infty\right\},\]
	where $\lfloor s \rfloor$ denotes the integer part of $s$, $\theta=s-\lfloor s \rfloor$ and
	\[\left|u\right|_{\theta,p,(0,\ell_{\me})}^p \coloneqq \int_{0}^{\ell_{\me}} \int_0^{\ell_{\me}} \frac{|u(x)-u(y)|^p}{|x-y|^{1+\theta \cdot p}}\dx\dy.\]
	Furthermore, 
	\[\left\|u\right\|_{W^{s,p}(0,\ell_{\me})} \coloneqq \|u\|_{W^{\left\lfloor s \right\rfloor,p}(0,\ell_{\me})}+\left|u^{(\left\lfloor s \right\rfloor)}\right|_{\theta,p,(0,\ell_{\me})}.\]
	$H^{s}(0,\ell_{\me})\coloneqq W^{s,2}(0,\ell_{\me})$.
	\item Similarly as above we can define $W_0^{s,p}(0,\ell_{\me})$ for $s>0$ and $1\leq p<\infty$ as the closure of the space of infinitely many times differentiable functions having compact support in $(0,\ell_{\me})$ with respect to the $W^{s,p}(0,\ell_{\me})$-norm, and $H_0^{s}(0,\ell_{\me})\coloneqq W^{s,2}(0,\ell_{\me})$.
\end{itemize}
\end{notat}

%%%%%%%%%%%%%%%%%%%%%%%%%%%%%%%%%%%%%%%%%%%%%%%%%%%%%%%%%%%%%%%%%%%%%%%%%%%%%%%%%%%%%%%%%%%%%%%%%%%%%%%%%%%%%%%%%%%%%%%%%%%%%%%%%%%%%%%%%
\section{Well-posedness of the deterministic problem}\label{sec:determnetwork}
%%%%%%%%%%%%%%%%%%%%%%%%%%%%%%%%%%%%%%%%%%%%%%%%%%%%%%%%%%%%%%%%%%%%%%%%%%%%%%%%%%%%%%%%%%%%%%%%%%%%%%%%%%%%%%%%%%%%%%%%%%%%%%%%%%%%%%%%%%%

%%%%%%%%%%%%%%%%%%%%%%%%%%%%%%%%%%%%%%%%%%%%%%%%%%%%%%%%%
%\subsection{The abstract Cauchy problem}\label{subsec:systeq}
%%%%%%%%%%%%%%%%%%%%%%%%%%%%%%%%%%%%%%%%%%%%%%%%%%%%%%%%%

We start with the deterministic problem
\begin{equation}\label{eq:netcp}
\left\{\begin{aligned}
\dot{u}_{\me}(t,x) & =  (c_{\me} u_{\me}')'(t,x)-p_{\me}(x) u_{\me}(t,x), &x\in (0,\ell_{\me}),\;t>0,\; \me\in\mE,\;\; & (a)\\
0 & = I_{\mv}U(t,\mv),\;   &t>0,\; \mv\in\mV,\;\;& (b)\\
0 & = M L u(t)+ C u(t), & t>0,\;\;& (c)\\
u_{\me}(0,x) & =  u_{0,\me}(x), &x\in [0,\ell_{\me}],\;\; \me\in\mE,\;\;& (d)
\end{aligned}
\right.
\end{equation}
see also  \cite{Mu07}, \cite{MR07}. 

In the rest of the paper we set the following assumptions on the matrix $M$.
\begin{asum}\label{as:M}
The matrix $M=\left(b_{ik}\right)_{n\times n}$ is 
\begin{enumerate}
	\item real, symmetric,
	\item for $i \neq k,$ $b_{ik}\geq 0$, that is, $M$ has positive off-diagonal;
	\item 
\[b_{ii}+\sum_{k\neq i}b_{ik}< 0, \quad i=1,\dots ,n.\]
\end{enumerate}
\end{asum}
We would like to rewrite our system in the form of an abstract Cauchy problem. First we consider the Hilbert space
\begin{equation}\label{eq:E2}
E_2\coloneqq \prod_{\me\in\mE}L^2\left(0,\ell_{\me}\right)
\end{equation}
as the \emph{state space} of the edges, endowed with the natural inner product
\[\langle u,v\rangle_{E_2}\coloneqq \sum_{\me\in\mE} \int_0^{\ell_{\me}} u_{\me}(x)v_{\me}(x) dx,\qquad
u, v \in E_2.\]
On $E_2$ we define the operator
\begin{equation}\label{eq:opAmax}
A_{\max}\coloneqq \diag\left(\frac{d}{dx}\left(c_{\me} \frac{d}{dx}\right)-p_{\me} \right)_{\me\in\mE}
\end{equation}
with maximal domain
\begin{equation}\label{eq:domAmax}
D(A_{\max})\coloneqq \left\{u\in\prod_{\me\in\mE} H^2(0,\ell_{\me}): 0 = I_{\mv}U(\mv),\; \mv\in\mV\right\}
\end{equation}
containing only the continuity condition in its domain.

With these notations, we can rewrite \eqref{eq:netcp} in form of an abstract Cauchy problem in the same way as in \cite[(2.8)]{KS21}. Define
\begin{align}\label{eq:amain}
A_2&\coloneqq A_{\max}\\
D(A_2)&\coloneqq\left\{u\in D(A_{\max})\colon 0=MLu + Cu\right\}.
\end{align}
Using this, \eqref{eq:netcp} becomes
\begin{equation}\label{eq:acp}
\begin{split}
\dot{u}(t)&= A_2 u(t),\; t> 0,\\
u(0)&=u_{0},
\end{split}
\end{equation}
with $u_0=(u_{0,\me})_{\me\in\mE}$.

In what follows the notion of semigroup and its generator is understood in the sense of \cite[Def.~3.2.5]{ABHN11}. That is a strongly continuous function $T \colon (0,\infty)\to \mathcal{L}(E)$, (where $E$ is a Banach space and $\mathcal{L}(E)$ denotes the bounded linear operators on $E$) satisfying
\begin{enumerate}[(a)]
	\item $T(t + s) = T(t)T(s)$, $s,\, t > 0$,
	\item there exists $c > 0$ such that $\|T(t)\| \leq c$ for all $t\in (0, 1]$,
	\item $T(t)x = 0$ for all $t > 0$ implies $x = 0$
\end{enumerate}
is called a \emph{semigroup}. By the proof of \cite[Thm.~3.1.7]{ABHN11} there exist constants $M,\omega\geq 0$ such that $\|T(t)\|\leq M\e^{\omega t}$ for all $t>0$. From \cite[Prop.~3.2.4]{ABHN11} we obtain that there exists a unique operator $A$ with $(\omega ,\infty ) \subset \rho(A)$ and
\begin{equation}\label{eq:}
R(\la,A)=\int_0^{\infty}\e^{-\la t}T(t)\dt\quad (\la>\omega),
\end{equation}
and we call $(A,D(A))$ the \emph{generator} of $T$. The semigroup $T$ is \emph{strongly continuous} or $C_0$, that is $T \colon [0,\infty)\to \mathcal{L}(E)$ is a strongly continuous function satisfying the property
\begin{align}
T(t + s) &= T(t)T(s),\quad  s,\, t \geq 0,\\
T(0)&=Id
\end{align}
if and only if its generator is densely defined, see \cite[Cor.~3.3.11.]{ABHN11}. According to \cite[Def.~3.7.1]{ABHN11}, we call the semigroup \emph{analytic} or \emph{holomorphic} if there exists $\theta \in (0,\frac{\pi}{2}]$ such that $T$ has a holomorphic extension to the sector
\[\Sigma_{\theta}\coloneqq \{z\in\comp\setminus \{0\}\colon |\mathrm{arg}\, z|<\theta\}\]
which is bounded on $\Sigma_{\theta'}\cap \{z\in\comp\colon |z|\leq 1\}$ for all $\theta'\in (0,\theta)$. We say that an analytic semigroup is \emph{contractive} when the semigroup operators considered on the positive real half-axis are contractions. We call a semigroup \emph{bounded analytic} or \emph{bounded homomorphic} of angle $\theta \in (0,\frac{\pi}{2}]$ if it has a bounded holomorphic extension to $\Sigma_{\theta'}$ for all $\theta'\in (0,\theta)$.

Furthermore, we say that the operator $(A,D(A))$ is \emph{sectorial} if there exists $\theta \in (0,\frac{\pi}{2}]$ such that
\[\Sigma_{\frac{\pi}{2}+\theta}\subset \rho(A)\text{ and }\sup_{\lambda\in \Sigma_{\frac{\pi}{2}+\theta-\ve}}\|\lambda R(\lambda,A) \|<\infty\text{ for all }\ve>0.\]
By \cite[Thm.~3.7.11]{ABHN11} we know that the generators of bounded analytic semigroups are exactly the sectorial operators.

In the same way as in \cite{KS21} we can prove the following important result.

\begin{prop}\label{prop:A2tul}
The operator $(-A_2,D(A_2))$ in \eqref{eq:amain}, is the operator associated with the form
\begin{equation}\label{eq:ea}
\begin{split}
D(\ea)&=\left\{u\in \prod_{\me\in\mE}H^1(0,\ell_{\me})\colon I_{\mv}U(\mv)=0,\; \mv\in\mV\right\},\\
\ea(u,v)&=\sum_{\me\in\mE}\int_0^{\ell_{\me}}c_{\me}\cdot u_{\me}'\cdot v_{\me}'+\sum_{\me\in\mE}\int_0^{\ell_{\me}} p_{\me}\cdot u_{\me}\cdot v_{\me}-\langle MLu,Lv\rangle_{\real^n}
\end{split}
\end{equation}
in the following sense: 
\begin{equation}%\label{eq:ea}
\begin{split}
D(A_2)&=\left\{u\in D(\ea) \colon \exists\, h\in E_2\text{ s.t. }\ea(u,v)=\langle h,v\rangle_{E_2}\text{ for all }v\in D(\ea)\right\},\\
-A_2 u&=h.
\end{split}
\end{equation}
The form $(\ea,D(\ea))$ is symmetric, densely defined, continuous, closed and accretive. The operator $(A_2,D(A_2))$ is densely defined, dissipative, sectorial and self-adjoint with $(0,+\infty)\subset\rho(A_2)$.
The strongly continuous semigroup $(T_2(t))_{t\geq 0}$ generated by $(A_2,D(A_2))$ is bonded analytic, positive and contractive. 
\end{prop}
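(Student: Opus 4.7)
The plan is to prove every assertion through the form method, adapting the argument of \cite{KS21} from constant edge lengths to the present variable-length setting. All properties of $A_2$ and of $(T_2(t))_{t\geq 0}$ will be read off from four structural properties of $(\ea,D(\ea))$---dense domain, symmetry, continuity, closedness---together with accretivity and a first Beurling--Deny criterion.

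The form properties are reasonably straightforward. Density follows because $\prod_{\me\in\mE}H^1_0(0,\ell_{\me})\subset D(\ea)$; symmetry is immediate from $c_{\me},p_{\me}$ being real-valued and $M=M^{\top}$ by Assumption \ref{as:M}(i); continuity on $D(\ea)$ in the natural $\prod H^1$-norm follows from boundedness of $c_{\me}\in C[0,\ell_{\me}]$, $p_{\me}\in L^{\infty}$, and the continuity of the trace map $L\colon D(\ea)\to\real^n$ afforded by the one-dimensional Sobolev embedding $H^1\hookrightarrow C$. For accretivity, the two integral terms are manifestly nonnegative since $c_{\me}>0$ and $p_{\me}\geq 0$, so the matter reduces to showing $-\langle M\xi,\xi\rangle_{\real^n}\geq 0$ for all $\xi\in\real^n$. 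Combining Assumptions \ref{as:M}(ii)--(iii), every Gershgorin disc of the symmetric $M$ lies in the open negative half-line, so $M$ is negative definite; the form is then bounded below by a coercive quadratic form on $D(\ea)$, whence closedness follows via a routine Poincaré-type estimate edge by edge.

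Identifying the form-associated operator with $A_2$ is the step demanding most care. For $u\in\prod H^2$ satisfying vertex continuity I would integrate by parts edge by edge and regroup the boundary contributions by vertex; using the outward-derivative convention and the continuity of any test function $v\in D(\ea)$ at each $\mv$, the endpoint contributions collapse into the Kirchhoff pairing $\langle Cu, Lv\rangle_{\real^n}$, yielding
\[
\ea(u,v)=\sum_{\me\in\mE}\int_0^{\ell_{\me}}\bigl(-(c_{\me}u_{\me}')'+p_{\me}u_{\me}\bigr)v_{\me}\dx-\langle Cu+MLu,Lv\rangle_{\real^n}.
\]
For $\ea(u,\cdot)$ to extend continuously to $E_2$, the vertex pairing must vanish for every $v\in D(\ea)$; surjectivity of $L\colon D(\ea)\to\real^n$ then forces $Cu+MLu=0$, which is precisely the defining condition of $D(A_2)$. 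The standard form representation theorem now delivers self-adjointness, dissipativity, sectoriality, and generation of a bounded analytic $C_0$-semigroup; the spectrum of $A_2$ lies in $(-\infty,0]$ so $(0,\infty)\subset\rho(A_2)$, and accretivity of the form translates into $L^2$-contractivity. Positivity comes from the first Beurling--Deny criterion: $u^{\pm}\in D(\ea)$ is standard in one dimension, and the integral parts of $\ea(u^+,u^-)$ vanish by disjoint supports, leaving only
\[
\ea(u^+,u^-)=-\langle M(Lu)^+,(Lu)^-\rangle_{\real^n}=-\sum_{i\neq k}b_{ik}(Lu)^+_i(Lu)^-_k\leq 0
\]
by Assumption \ref{as:M}(ii). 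The main obstacle is the bookkeeping in the integration-by-parts step: one must keep track of signs so that every edge endpoint is paired with the outward derivative at the relevant vertex before the continuity of $v$ across $\mv$ can be invoked, which is what makes the Kirchhoff coupling $Cu+MLu=0$ emerge with the correct sign in the operator domain.
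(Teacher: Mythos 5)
Your argument is correct and follows essentially the same route as the paper, whose proof simply delegates each of these steps (the form properties under Assumption 2.1, the representation theorem, the first Beurling--Deny criterion for positivity, and standard analytic-semigroup theory) to the cited results of Mugnolo, Mugnolo--Romanelli and Ouhabaz; you have in effect unfolded those citations. The only point worth tightening is the domain identification: you verify that $u\in\prod_{\me\in\mE}H^2(0,\ell_{\me})$ with vertex continuity and $Cu+MLu=0$ belongs to the domain of the form operator, but the converse inclusion also requires the interior regularity step (testing $\ea(u,v)=\langle h,v\rangle_{E_2}$ against smooth functions compactly supported in each edge to conclude that $u$ already has the requisite second-order Sobolev regularity edgewise) before your integration by parts and the surjectivity of $L\colon D(\ea)\to\real^n$ can be invoked.
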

\begin{proof}
%All the statements follow in a similar way as \cite[Prop.~2.3--2.6]{KS21}, using 
We can apply \cite[Lem.~3.4]{Mu07} and \cite[Lem.~3.3]{MR07} directly to obtain the first part of the statement. The properties of $\ea$ follow by the proofs of \cite[Prop.~3.2 and Cor.~3.3]{Mu07} and \cite[Lem.~3.2]{MR07}, using Assumption \ref{as:M} and \eqref{eq:pe}. The statements \cite[Prop.~1.51 and Thm.~1.52]{Ou04} imply the properties of the semigroup $(T_2(t))_{t\geq 0}$ generated by $(A_2,D(A_2))$. The self-adjointness of $A_2$ is true by \cite[Prop.~1.24]{Ou04}, the remaining properties follow by standard semigroup theory, see e.g.~\cite[Thm.~3.4.5 and Thm.~3.7.11]{ABHN11}.
\end{proof}

For our results regarding \eqref{eq:stochnet} we will need well-posedness of \eqref{eq:acp} not only on the Hilbert space $E_2$ but also on $L^p$-spaces. Therefore, we introduce the following notions. Let
\begin{equation}\label{eq:Ep}
E_p\coloneqq \prod_{\me\in\mE}L^p\left(0,\ell_{\me}\right),\quad p\in [1,\infty],
\end{equation}
endowed with the norm
\begin{equation}%\label{eq:}
\begin{split}
\left\|u\right\|_{E_p}^p&\coloneqq \sum_{\me\in\mE}\left\|u_{\me}\right\|_{L^p\left(0,\ell_{\me}\right)}^p,\quad u\in E_p,\; p\in [1,\infty)\\
\left\|u\right\|_{E_{\infty}}&\coloneqq \max_{\me\in\mE}\left\|u_{\me}\right\|_{L^{\infty}\left(0,\ell_{\me}\right)},\quad u\in E_{\infty}.
\end{split}
\end{equation}

\begin{prop}\label{prop:sgrextend}
The semigroup $(T_2(t))_{t\geq 0}$ extends to a family of analytic, contractive, positive one-parameter semigroups $(T_p(t))_{t\geq 0}$ on $E_p$ for $1\leq p\leq \infty$, generated by $(A_p,D(A_p))$. These semigroups are strongly continuous if $p\in[1,\infty)$ and consistent in the sense that if $q,p\in [1,\infty]$ and $q\geq p$ then
\begin{equation}\label{eq:sgrconsistent}
T_p(t)u=T_q(t)u\text{ for }u\in E_q.
\end{equation}
\end{prop}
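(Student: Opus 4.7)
The plan is to invoke the form-theoretic $L^p$-extrapolation machinery of Ouhabaz together with Stein's analytic interpolation. By Proposition \ref{prop:A2tul} the semigroup $(T_2(t))_{t \geq 0}$ is the self-adjoint semigroup associated with the symmetric, closed, accretive form $(\ea,D(\ea))$, and is already positive, $L^2$-contractive, and bounded analytic on $E_2$. Once I additionally establish $L^\infty$-contractivity, Riesz--Thorin interpolation and duality will produce consistent contractive extensions on every $E_p$, and Stein interpolation will transfer the analyticity.

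Positivity, already recorded in Proposition \ref{prop:A2tul}, comes from the first Beurling--Deny criterion applied to $\ea$: for real $u \in D(\ea)$, $|u|$ remains continuous on $\mG$ and lies in $\prod_{\me \in \mE}H^1(0,\ell_\me)$, so the diffusion and potential parts of $\ea$ evaluated on $|u|$ equal those on $u$, while the non-local boundary contribution satisfies $-\langle ML|u|,L|u|\rangle \leq -\langle MLu,Lu\rangle$ thanks to Assumption \ref{as:M}(ii). For $L^\infty$-contractivity I would verify the second Beurling--Deny criterion in Ouhabaz's formulation (see \cite[Ch.~2]{Ou04}): for every real $u \in D(\ea)$, $u \wedge 1 \in D(\ea)$ and $\ea(u \wedge 1,(u-1)^+) \geq 0$. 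Membership is immediate, since truncating the common vertex value at $1$ preserves continuity on $\mG$; the diffusion and potential contributions are non-negative by the standard $H^1$-truncation chain-rule argument combined with $p_\me \geq 0$. The delicate piece is the boundary term $-\langle ML(u\wedge 1), L(u-1)^+\rangle_{\real^n}$: writing $a_\mv = u(\mv)\wedge 1 \in [0,1]$ and $b_\mv = (u(\mv)-1)^+ \geq 0$ for $\mv \in \mV$, one has $b_\mv > 0 \Rightarrow a_\mv = 1$, and the inequality $-\sum_{i,k} b_{ik} a_i b_k \geq 0$ reduces to a row-wise estimate using $b_{ik} \geq 0$ for $i \neq k$ together with the strict row-sum condition Assumption \ref{as:M}(iii). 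This combinatorial check on $M$ is the main technical obstacle.

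With positivity, $L^2$- and $L^\infty$-contractivity in hand, Riesz--Thorin interpolation yields consistent contractions $T_p(t)$ on $E_p$ for every $p \in [2,\infty]$, and self-adjointness of $A_2$ together with duality $E_p^* = E_{p'}$ extends the family to $p \in [1,2)$; the consistency relation \eqref{eq:sgrconsistent} is then immediate from the interpolation construction. For $p \in [1,\infty)$, strong continuity of $T_p(t)$ follows from strong continuity on $E_2$ together with uniform boundedness and density of $E_2 \cap E_p$ in $E_p$; strong continuity at $p = \infty$ is not claimed and indeed generally fails. Positivity is preserved under interpolation. Bounded analyticity of $(T_p(t))$ on $E_p$ for every $p \in [1,\infty]$ then follows from Stein's analytic interpolation theorem applied to the $L^2$-analytic, $L^\infty$-contractive family; see \cite{Ou04}. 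All steps apart from the boundary-form Beurling--Deny check are standard extrapolation theory.
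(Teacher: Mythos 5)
Your verification of sub-Markovianity---positivity via the first Beurling--Deny criterion and $L^\infty$-contractivity via Ouhabaz's criterion, with the row-sum condition of Assumption~\ref{as:M} controlling the non-local boundary term $-\langle ML(u\wedge 1),L(u-1)^+\rangle_{\real^n}$---is sound, and it is in fact the same first step the paper takes (it delegates this to the proof of \cite[Lem.~B.1]{KS21JEE}). The extrapolation to consistent, positive, contractive semigroups on all $E_p$, strongly continuous for $p<\infty$, is likewise fine.

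The gap is in the last step. Stein's analytic interpolation, applied between the analytic semigroup on $E_2$ and the merely contractive families on $E_1$ and $E_\infty$, yields analyticity only for $1<p<\infty$, on sectors whose angle degenerates as $p\to 1$ or $p\to\infty$; it gives nothing at the endpoints themselves, and endpoint analyticity is not a formal consequence of sub-Markovianity together with $L^2$-analyticity. Since the proposition asserts analyticity of $(T_p(t))_{t\geq 0}$ for \emph{all} $p\in[1,\infty]$, an additional ingredient is required, and this is precisely what the paper's proof supplies: a Gaussian upper bound for the heat kernel of $(T_2(t))_{t\geq 0}$, obtained from \cite[Thm.~4.1]{PZ11} using that $-M$ is positive definite under Assumption~\ref{as:M}. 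Gaussian bounds are the standard sufficient condition (see \cite{Ou04}) for the extrapolated semigroups of a sub-Markovian semigroup to be analytic on $E_p$ for every $p\in[1,\infty]$. Your proposal never establishes, or mentions, any kernel estimate, so the analyticity claim at $p=1$ and $p=\infty$ remains unsupported; the rest of the argument is essentially the paper's.
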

\begin{proof}
The proof can be carried out analogously to that of \cite[Prop.~2.4.2]{KS21JEE}. The main tool for obtaining the desired family of semigroups $(T_p(t))_{t\geq 0}$ is to prove that $(T_2(t))_{t\geq 0}$ is sub-Markovian and has Gaussian upper bound. The previous one can be verified in the same way as it has been done in the proof of \cite[Lem.~B.1]{KS21JEE}, using Assumption \ref{as:M} and the properties of the form $\ea$ from Proposition \ref{prop:A2tul}. For the existence of Gaussian upper bound we use that by Assumption \ref{as:M}, $-M$ is positive definite, and thus we can apply \cite[Thm.~4.1]{PZ11} directly.
\end{proof}

We also can prove that the generators $(A_p,D(A_p))$ for $p\in [2,+\infty)$ have in fact the same form as $A_2$ on $E_2$, with appropriate domain. Let
\begin{equation}\label{eq:opApmax}
\begin{split}
A_{p,\max}& \coloneqq \diag\left(\frac{d}{dx}\left(c_{\me} \frac{d}{dx}\right)-p_{\me} \right)_{\me\in\mE}\\
D(A_{p,\max})&\coloneqq \left\{u\in\prod_{\me\in\mE} W^{2,p}(0,\ell_{\me}): 0 = I_{\mv}U(\mv),\; \mv\in\mV\right\}.
\end{split}
\end{equation}
\begin{lemma}\label{lem:Ap}
For all $p\in [2,\infty)$ the semigroup generators $(A_p,D(A_p))$ are given by
\begin{equation}\label{eq:Ap}
\begin{split}
A_p& =A_{p,\max},\\
D(A_p)&=\left\{ u\in D(A_{p,\max})\colon 0=MLu + Cu\right\}.
\end{split}
\end{equation}
\end{lemma}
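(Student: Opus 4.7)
The plan is to introduce the candidate operator $\tilde{A}_p$ with domain and action given by the right-hand side of \eqref{eq:Ap}, and to establish $\tilde{A}_p=A_p$ by proving both inclusions via the consistency \eqref{eq:sgrconsistent} of the resolvents with that of $A_2$, which is already completely identified through Proposition \ref{prop:A2tul}. Throughout I would use freely the embeddings $W^{2,p}(0,\ell_{\me})\hookrightarrow H^2(0,\ell_{\me})$ and $L^p(0,\ell_{\me})\hookrightarrow L^2(0,\ell_{\me})$, both valid on the finite-measure intervals for $p\ge 2$, and fix some $\lambda>0$ in $\rho(A_2)\cap\rho(A_p)$; both resolvent sets contain $(0,+\infty)$ since $(T_2(t))_{t\ge 0}$ and $(T_p(t))_{t\ge 0}$ are contractive by Propositions \ref{prop:A2tul} and \ref{prop:sgrextend}.

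For the inclusion $\tilde{A}_p\subset A_p$, I would start with $u\in D(\tilde{A}_p)$: the embedding places $u$ in $D(A_{2,\max})$, and the Kirchhoff law $MLu+Cu=0$ is unchanged, so $u\in D(A_2)$ with $A_2 u=A_{p,\max}u=\tilde{A}_p u$, which belongs to $E_p$ by the very definition of $\tilde{A}_p$. Setting $f\coloneqq\lambda u-A_2u\in E_p$, one has $u=R(\lambda,A_2)f$, and the Laplace representation of the resolvent combined with \eqref{eq:sgrconsistent} forces $R(\lambda,A_2)f=R(\lambda,A_p)f$ for every $f\in E_p$. Hence $u\in D(A_p)$ with $A_p u=\lambda u-f=\tilde{A}_p u$. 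For the reverse inclusion I take $u\in D(A_p)$, set $f\coloneqq\lambda u-A_p u\in E_p\subset E_2$, and run the identification of resolvents backwards to conclude $u=R(\lambda,A_p)f=R(\lambda,A_2)f\in D(A_2)$. In particular each $u_{\me}$ lies in $H^2(0,\ell_{\me})$, continuity across the vertices holds, and $MLu+Cu=0$; what remains is to upgrade the edgewise regularity from $H^2$ to $W^{2,p}$.

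To perform that upgrade I would rewrite the $\lambda$-resolvent equation on the edge $\me$ as
\[
(c_{\me}u_{\me}')'=(\lambda+p_{\me})u_{\me}-f_{\me},
\]
whose right-hand side lies in $L^p(0,\ell_{\me})$ thanks to $u_{\me}\in H^2\hookrightarrow C$, $p_{\me}\in L^\infty$ and $f_{\me}\in L^p$. Hence $c_{\me}u_{\me}'\in W^{1,p}(0,\ell_{\me})$, and dividing by the strictly positive continuous weight $c_{\me}$, which is bounded above and away from zero on $[0,\ell_{\me}]$, should promote $u_{\me}'$ itself to $W^{1,p}(0,\ell_{\me})$, so that $u_{\me}\in W^{2,p}(0,\ell_{\me})$ for each edge and consequently $u\in D(\tilde{A}_p)$ with $\tilde{A}_p u=A_p u$.

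The main obstacle I anticipate is precisely this last regularity step: since $c_{\me}$ is only assumed continuous, the product rule $(c_{\me}u_{\me}')'=c_{\me}'u_{\me}'+c_{\me}u_{\me}''$ is not directly available, and dividing a $W^{1,p}$-function by a merely continuous weight need not preserve $W^{1,p}$-regularity in general. The clean way I would argue would be to first reduce, by smooth approximation of $c_{\me}$ together with stability of the resolvent under coefficient perturbation, to the case of smoother coefficients where the product-rule computation applies, or alternatively to cite a tailored 1D elliptic regularity statement for divergence-form operators with continuous leading coefficient. Everything else is structural bookkeeping around Proposition \ref{prop:sgrextend} and the Laplace-transform identity for the resolvent.
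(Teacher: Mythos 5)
Your proposal is correct and follows essentially the same route as the paper: the paper invokes \cite[Prop.~II.2.3]{EN00} to identify $A_p$ as the part of $A_2$ in $E_p$ (which is exactly your Laplace-transform identification of the resolvents via the consistency \eqref{eq:sgrconsistent}), and then asserts that ``a direct computation yields'' \eqref{eq:Ap} --- that computation being precisely your elliptic bootstrap from $H^2$ to $W^{2,p}$ on each edge. The obstacle you flag about dividing $c_{\me}u_{\me}'\in W^{1,p}(0,\ell_{\me})$ by a merely continuous $c_{\me}$ is a fair observation, but it concerns the paper's standing hypotheses rather than your argument: for $D(A_{\max})\subset\prod_{\me\in\mE}H^2(0,\ell_{\me})$ to be meaningful at all, multiplication by $c_{\me}$ must already preserve first-order Sobolev regularity (i.e.\ $c_{\me}$ at least Lipschitz, as in the cited sources \cite{Mu07}, \cite{MR07}), and under that implicit assumption your division step goes through directly, with no approximation argument needed.
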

\begin{proof}
For $p=2$ the statement follows by \eqref{eq:amain}. Let $2<p<+\infty$. By \eqref{eq:sgrconsistent} we know that $(T_2(t))_{t\geq 0}$ leaves $E_p$ invariant and $T_p(t)=T_2(t)|_{E_p}$. Since $E_p\hookrightarrow E_2$ holds and the semigroups are strongly continuous, we can apply \cite[Prop.~II.2.3]{EN00} and obtain that $A_p$ is the part of $A_2$ in $E_p$. Hence, a direct computation yields  \eqref{eq:Ap}.
\end{proof}

%%%%%%%%%%%%%%%%%%%%%%%%%%%%%%%%%%%%%%%%%%%%%%%%%%%%%
\section{A stochastic reaction-diffusion equation on a metric graph}\label{sec:srden}
%%%%%%%%%%%%%%%%%%%%%%%%%%%%%%%%%%%%%%%%%%%%%%%%%%%%%

%%%%%%%%%%%%%%%%%%%%%%%%%%%%%%%%%%%%%%%%
\subsection{An abstract result for a stochastic reaction-diffusion equation}\label{sec:srdenabstr}
%%%%%%%%%%%%%%%%%%%%%%%%%%%%%%%%%%%%%%%

Let $(\Omega,\mathscr{F},\mathbb{P})$ is a complete probability space endowed with a right continuous filtration $\mathbb{F}=(\mathscr{F}_t)_{t\in [0,T]}$ for a given $T>0$. Let $(W_H(t))_{t\in [0,T]}$ be a cylindrical Wiener process, defined on $(\Omega,\mathscr{F},\mathbb{P})$, in some Hilbert space $H$ with respect to the filtration $\mathbb{F}$; that is,
  $(W_H(t))_{t\in [0,T]}$ is $(\mathscr{F}_t)_{t\in [0,T]}$-adapted and for all $t>s$, $W_H(t)-W_H(s)$ is independent of $\mathscr{F}_s$.

We will cite a result from \cite{KS21JEE}, concerning the following abstract equation
\begin{equation}\tag{SCP}\label{eq:SCP}
\left\{
\begin{aligned}
d X(t)&=[A X(t)+F(t,X(t))+\widetilde{F}(t,X(t))]dt+G(t,X(t))d W_{H}(t)\\
X(0)&=\xi.
\end{aligned}
\right.
\end{equation}

In what follows let $E$ be a real Banach space. Occasionally -- without being stressed -- we have to pass to appropriate complexification (see e.g. \cite{MST99}) when we use sectoriality arguments.

If we assume that $(A, D(A))$ generates a strongly continuous, analytic semigroup $S$ on the Banach space $E$ with $\Vert S(t)\Vert\leq M\e^{\omega t}$, $t\geq 0$ for some $M\geq 1$ and $\omega\in\real$, then for $\omega'>\omega$ the fractional powers $(\omega'-A)^{\alpha}$ are well-defined for all $\alpha\in(0,1).$ In particular, the fractional domain spaces
\begin{equation}\label{eq:fractdom}
E^{\alpha}\coloneqq D((\omega'-A)^{\alpha}),\quad \|u\|_{\alpha}\coloneqq \|(\omega'-A)^{\alpha}u\|,\quad u\in D((\omega'-A)^{\alpha})
\end{equation}
are Banach spaces. It is well-known (see e.g. \cite[$\mathsection$II.4--5.]{EN00}), that up to equivalent norms, these space are independent of the choice of $\omega'.$

For $\alpha\in(0,1)$ we define the extrapolation spaces $E^{-\alpha}$ as the completion of $E$ under the norms $\|u\|_{-\alpha}\coloneqq \|(\omega'-A)^{-\alpha}u\|$, $u\in E$. These spaces are independent of $\omega'>\omega$ up to an equivalent norm.

We fix $E^0\coloneqq E$.

\begin{rem}\label{rem:omega0}
If $\omega=0$ (hence, the semigroup $S$ is bounded), then by \cite[Proposition 3.1.7]{Haase06}
we can choose $\omega'=0$. That is,
\[E^{\alpha}\cong D((-A)^{\alpha}),\quad \alpha\in [0,1),\]
when $D((-A)^\alpha)$ is equipped with the graph norm.
\end{rem}
Let $Z$ be a Banach space. For $u\in Z$ we define the \emph{subdifferential of the norm at} $u$ as the set
\begin{equation}\label{eq:subdiff}
\partial\|u\|\coloneqq \left\{u^*\in Z^*\colon \|u^*\|_{Z^*}=1\text{ and }\langle u,u^*\rangle=1\right\}
\end{equation}
which is not empty if $u\neq 0$ by the Hahn-Banach theorem.

We now introduce the following assumptions for the operators in (SCP), see \cite[Ass.~3.7]{KS21JEE}. Here, unless stated otherwise, $\|\cdot \|$ will denote the norm of $Z$.

%We remark that these assumptions contain the space $E^c$, on which the semigroup is analytic (but not strongly continuous).

\begin{assum}$ $\label{assum:main}
\begin{enumerate}
	\item Let $E$ be a UMD Banach space of type $2$ and $(A, D(A))$ a densely defined, closed and sectorial operator on $E$.
	\item We have continuous (but not necessarily dense) embeddings for some $\theta\in (0,1)$ \[E^{\theta}\hookrightarrow Z\hookrightarrow E.\]
	\item The strongly continuous analytic semigroup $S$ generated by $(A, D(A))$ on $E$ restricts to an analytic, contractive semigroup, denoted by $\mcS^{z}$ on $Z$, with generator $(\mcA^z,D(\mcA^z))$.
	\item The map $F\colon [0,T]\times\Omega\times Z\to Z$ is continuous in the first variable and locally Lipschitz continuous in the third variable in the sense that for all $r>0$, there exists a constant $L_{F}^{(r)}$ such that
	      \[\left\|F(t,\omega,u)-F(t,\omega,v)\right\|\leq L_{F}^{(r)}\|u-v\|\]
	      for all $\|u\|,\|v\|\leq r$ and $(t,\omega)\in [0,T]\times \Omega$ and there exists a constant $C_{F,0}\geq 0$ such that
				\[\left\|F(t,\omega,0)\right\|\leq C_{F,0},\quad t\in[0,T],\; \omega\in\Omega.\]
				Moreover, for all $u\in Z$ the map $(t,\omega)\mapsto F(t,\omega,u)$ is strongly measurable and adapted.\\
				Finally, for suitable constants $a',b'\geq 0$ and $N\geq 1$ we have
				\[\langle A u+F (t,\omega,u+v), u^*\rangle\leq a'(1+\|v\|)^N+b'\|u\|\]
				for all $u\in D(A|_{Z})$, $v\in Z$, $\omega\in\Omega$ and $u^*\in\partial\|u\|,$ see \eqref{eq:subdiff}.
	\item There exist constants $a'',\, b'',\, k,\, K>0$ with $K\geq k$ such that the function $F\colon [0,T]\times\Omega\times Z\to Z$ satisfies
	      \begin{equation}\label{eq:assymF}
				\langle F (t,\omega, u+v)-F (t,\omega, v), u^*\rangle\leq a''(1+\|v\|)^K-b''\|u\|^k
				\end{equation}
				for all $t\in[0,T]$, $\omega\in\Omega$, $u,v\in Z$ and $u^*\in\partial\|u\|,$ and
				\[\left\|F(t,\omega,v)\right\|\leq a''(1+\|v\|)^K\]
				for all $v\in Z.$, , $\omega\in\Omega$.
	\item For some constant $\kF\geq 0$, the map $\widetilde{F}\colon [0,T]\times\Omega\times Z\to E^{-\kF}$ is globally Lipschitz continuous in the third variable, uniformly with respect to the first and second variables.
				Moreover, for all $u\in Z$ the map $(t,\omega)\mapsto \widetilde{F}(t,\omega,u)$ is strongly measurable and adapted.\\
				Finally, for some $d'\geq 0$ we have
				\begin{equation}\label{eq:Ftildegrow2}
				\left\|\widetilde{F}(t,\omega,u)\right\|_{E^{-\kF}}\leq d'\left(1+\|u\|\right)
				\end{equation}
				for all $(t,\omega,u)\in [0,T]\times \Omega\times Z.$
	\item Let $\gamma(H,E^{-\kG})$ denote the space of $\gamma$-radonifying operators from the Hilbert space $H$ to $E^{-\kG}$ for some $0\leq \kG<\frac{1}{2}$ (see e.g. \cite[Sec.~3.1]{KvN12}). Then the map $G\colon [0,T]\times\Omega\times Z\to \gamma(H,E^{-\kG})$ is locally Lipschitz continuous in the sense that for all $r>0$, there exists a constant $L_{G}^{(r)}$ such that
	      \[\left\|G(t,\omega,u)-G(t,\omega,v)\right\|_{\gamma(H,E^{-\kG})}\leq L_{G}^{(r)}\|u-v\|\]
	      for all $\|u\|,\|v\|\leq r$ and $(t,\omega)\in [0,T]\times \Omega$. 
				Moreover, for all $u\in Z$ and $h\in H$ the map $(t,\omega)\mapsto G(t,\omega,u)h$ is strongly measurable and adapted.\\
				Finally, for suitable constant $c',$
				\[\left\|G(t,\omega,u)\right\|_{\gamma(H,E^{-\kG})}\leq c'\left(1+\|u\|\right)^{\frac{k}{K}}\] 
				for all $(t,\omega,u)\in [0,T]\times \Omega\times Z.$
\end{enumerate}
\end{assum}

\begin{rem}
In Assumptions \ref{assum:main}(3) we use the fact that since $S$ is analytic on $E$ and by Assumptions \ref{assum:main}(2), $D(A)\subset E^{\theta}\hookrightarrow Z$ holds, $S$ leaves $Z$ invariant. Hence, the restriction $\mcS^z$ of $S$ on $Z$ makes sense, and by assumption, $\mcS^z$ is an analytic contraction semigroup on $Z$. 
%Using \cite[Prop.~3.7.16]{ABHN11} we obtain that this is equivalent to the fact that the generator $A^c$ of $S^c$ is sectorial and dissipative. 
Note that since $\mcS^z$ is not necessarily strongly continuous, $\mcA^z$ is not necessarily densely defined. However, one can prove that $(\mcA^z,D(\mcA^z))$ is the \emph{part} of $(A, D(A))$ in $Z$, see \cite[Rem.~3.4]{KS21JEE}. 
\end{rem}

We recall that a mild solution of \eqref{eq:SCP} is a solution of the following integral equation
\begin{align}\label{eq:mildsolSCP}
X(t)&=S(t)\xi+\int_0^tS(t-s)\left(F(s,X(s))+\widetilde{F}(s,X(s))\right)\ds\notag\\
&+\int_0^tS(t-s)G(s,X(s))\,dW_H(s)\notag\\
&\eqqcolon S(t)\xi+S\ast F(\cdot,X(\cdot))(t)+S\ast \widetilde{F}(\cdot,X(\cdot))(t)+S\diamond G(\cdot,X(\cdot))(t)
\end{align}
where
\[S\ast f(t)=\int_0^tS(t-s)f(s)\ds\]
denotes the "usual" convolution, and
\[S\diamond g(t)=\int_0^tS(t-s)g(s)\,dW_H(s)\]
denotes the stochastic convolution with respect to $W_H.$ We also implicitly assume that all the terms on the right hand side of  \eqref{eq:mildsolSCP} are well-defined.
\medskip

We now cite the claim of \cite[Thm.~3.10]{KS21JEE} that will be crucial for the proof of our results. For a fixed $T>0$ and $q\geq 1$, define the space
\begin{equation}\label{eq:V_T}
V_{T,q}\coloneqq L^q\left(\Omega;C((0,T];Z)\cap L^{\infty}(0,T;Z)\right)
\end{equation}
being a Banach space with norm
\begin{equation}\label{eq:V_Tnorm}
\left\|u\right\|^q_{V_{T,q}}\coloneqq \mathbb{E}\sup_{t\in [0,T]}\|u(t)\|^q,\quad u\in V_{T,q}.
\end{equation} 
\begin{theo}[Theorem 3.10 of \cite{KS21JEE}]\label{theo:JEE}
Let $T>0$, $2<q<\infty$ and suppose that Assumptions \ref{assum:main} hold with 
\[\theta+\kF<1,\quad \theta+\kG<\frac{1}{2}-\frac{1}{q}.\]
Then for all $\xi\in L^q(\Omega,\mathscr{F}_0,\mathbb{P}; Z)$ there exists a global mild solution $X\in V_{T,q}$ of \eqref{eq:SCP}.  Moreover, for some constant $C_{q,T}>0$ we have
\[\|X\|_{V_{T,q}}^q\leq C_{q,T}\left(1+\mathbb{E}\|\xi\|^q\right).\]
\end{theo}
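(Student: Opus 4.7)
The plan is to follow the standard two-step Cerrai-type strategy for SPDEs with locally Lipschitz drift having a dissipative leading part on UMD Banach spaces of type~$2$, cf.\ \cite{Ce03,vNVW08,KvN12}: first establish a local mild solution by truncation and a Banach fixed-point argument in $V_{T,q}$, then promote it to a global solution via an a priori $L^q$-bound derived from the subdifferential estimate \eqref{eq:assymF}.

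\smallskip
\noindent\emph{Step 1. Local existence.} For $R>0$ pick a smooth cut-off $\rho_R\colon[0,\infty)\to[0,1]$ with $\rho_R\equiv 1$ on $[0,R]$ and $\rho_R\equiv 0$ on $[R+1,\infty)$, and truncate to $F_R(t,\omega,u)\coloneqq\rho_R(\|u\|)F(t,\omega,u)$, and analogously $\widetilde F_R$, $G_R$. By items (4), (6), and (7) of Assumption~\ref{assum:main} the truncated coefficients are globally Lipschitz and of linear growth from $Z$ into $Z$, $E^{-\kF}$, and $\gamma(H,E^{-\kG})$ respectively. The UMD/type-$2$ structure of $E$ together with the restrictions $\theta+\kF<1$ and $\theta+\kG<\tfrac12-\tfrac1q$ are precisely what is needed so that the deterministic convolutions $S\ast F_R,\,S\ast\widetilde F_R$ (via factorisation of fractional powers) and the stochastic convolution $S\diamond G_R$ (via $\gamma$-Burkholder--Davis--Gundy and analytic-semigroup bounds, \cite{vNVW08,KvN12}) define strict contractions on a small ball of $V_{T',q}$ for $T'$ small. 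Banach's fixed-point theorem yields a unique local mild solution $X_R\in V_{T,q}$; uniqueness gives the consistency $X_R=X_{R'}$ on $[0,\tau_R\wedge\tau_{R'}]$, with $\tau_R\coloneqq\inf\{t\colon \|X_R(t)\|>R\}\wedge T$, hence a maximal mild solution $X$ on $[0,\tau_\infty)$.

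\smallskip
\noindent\emph{Step 2. A priori bound.} Write $X=\Psi+Y$ with $Y\coloneqq S\ast\widetilde F(\cdot,X)+S\diamond G(\cdot,X)$, so that $\Psi$ formally satisfies
\begin{equation*}
\dot\Psi(t)=A\Psi(t)+F(t,\Psi(t)+Y(t)),\qquad \Psi(0)=\xi.
\end{equation*}
Pairing with $u^\ast\in\partial\|\Psi(t)\|$ and invoking \eqref{eq:assymF} with $u=\Psi$, $v=Y$ gives the formal pathwise estimate
\begin{equation*}
\tfrac{1}{k}\tfrac{d}{dt}\|\Psi\|^{k}\le a''(1+\|Y\|)^{K}\|\Psi\|^{k-1}-b''\|\Psi\|^{2k-1}+(\text{l.o.t.}),
\end{equation*}
from which Young's inequality yields a pathwise bound $\|\Psi(t)\|\le\|\xi\|+C_T(1+\sup_{s\le T}\|Y(s)\|)^{K/k}$. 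Condition (7) of Assumption~\ref{assum:main} says $\|G(\cdot,u)\|_{\gamma(H,E^{-\kG})}\lesssim(1+\|u\|)^{k/K}$, and combined with the $\gamma$-radonifying stochastic-convolution estimate this gives $\|Y\|_{V_{T,qK/k}}\lesssim 1+\|X\|_{V_{T,q}}^{k/K}$. A standard absorption argument, exploiting that $K\ge k$ so that $(k/K)\cdot (K/k)=1$, produces the desired
\[\|X\|_{V_{T,q}}^{q}\le C_{q,T}(1+\mathbb{E}\|\xi\|^{q}),\]
which in particular forces $\tau_\infty=T$ almost surely.

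\smallskip
\noindent\emph{Main obstacle.} The differential inequality in Step~2 is only formal because in a general Banach space $Z$ the map $t\mapsto\|\Psi(t)\|^{k}$ need not be differentiable and $\Psi$ need not take values in $D(A|_Z)$. The rigorous route, carried out in detail in \cite{KS21JEE}, is to replace $A$ by its Yosida approximations $A_\lambda\coloneqq\lambda A(\lambda-A)^{-1}$, solve the corresponding approximate SPDE whose paths live in $D(A)$ where a pathwise chain rule is available, apply \eqref{eq:assymF} to obtain the differential inequality with constants uniform in $\lambda$, and pass to the limit $\lambda\to\infty$ using the analytic-semigroup bounds on $Z$ from Assumption~\ref{assum:main}(3). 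Controlling the interaction of this limit with the stochastic convolution in the extrapolation space $E^{-\kG}$---so that all the estimates above survive---is the technically most delicate point of the proof.
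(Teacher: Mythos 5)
This statement is not proved in the paper at all: it is quoted verbatim as Theorem 3.10 of \cite{KS21JEE} and used as a black box, so there is no in-paper argument to compare yours against. That said, your sketch does reproduce the strategy actually carried out in the cited source (which follows \cite{Ce03} and \cite{KvN12}): truncation plus fixed point in $V_{T,q}$ for local existence, then the decomposition of $X$ into the stochastic/linear convolutions and a remainder $\Psi$ solving a pathwise deterministic equation, with the subdifferential estimate supplying the a priori bound. Two points where your write-up is thinner than the real argument: first, the pairing $\langle A\Psi+F(t,\Psi+Y),u^*\rangle$ is controlled directly by Assumption \ref{assum:main}(4) (that inequality exists precisely for this purpose); invoking only \eqref{eq:assymF} additionally requires the dissipativity $\langle A\Psi,u^*\rangle\le 0$ coming from the contractivity of $\mcS^z$ in Assumption \ref{assum:main}(3), which you use implicitly but do not state. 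Second, your ``standard absorption argument'' closes a self-referential inequality whose exponents multiply to exactly $1$ (the critical case $K\ge k$), so absorption is only possible because the convolution estimates carry a factor $T^{\ve}$ that can be made small; one then iterates over subintervals. Neither issue is a wrong turn, but both deserve to be made explicit if this were to stand as a proof rather than a citation.
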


%%%%%%%%%%%%%%%%%%%%%%%%%%%%%%%%%%%%%%%%
\subsection{Preparatory results}\label{sec:srdenprep}

%%%%%%%%%%%%%%%%%%%%%%%%%%%%%%%%%%%%%%%

To be able to verify our main results we need to prove some preliminary results regarding the setting of Section \ref{sec:determnetwork}. 
We make use of the fact that the semigroups involved here all leave the corresponding real spaces invariant, see \cite[Sec.~4.1]{KS21JEE}.

\begin{defi}\label{defi:Ec}
We denote by
\begin{equation}\label{eq:Ec}
E^c:=\prod_{\me\in\mE}C[0,\ell_{\me}]
\end{equation}
the product space of continuous functions on the edges (not necessarily continuous in the vertices in the sense of Definition \ref{defi:contG}).
The norm is defined as usual with
\begin{equation}\label{eq:normEc}
\left\|u\right\|_{E^c}\coloneqq \max_{\me\in\mE}\|u_{\me}\|_{C[0,\ell_{\me}]},\quad u\in E^c.
\end{equation}
\end{defi}
This space will play the role of the space $Z$ in our setting.\smallskip

In the rest of the paper we assume that 
\[p\geq 2\]
holds. This is justified by the fact that we will apply Theorem \ref{theo:JEE} and in Assumptions \ref{assum:main}(1), $E$ is assumed to be a UMD Banach space of type $2$.\smallskip

We first need that the semigroups $(T_p(t))_{t\geq 0}$ restrict to the same analytic semigroup of contractions on $E^c$.

\begin{prop}\label{prop:AonEc}
For all $p\in[2,+\infty]$, the semigroups $T_p$ leave $E^c$ invariant, and the restrictions $T_p|_{E^c}$ all coincide that we denote by $\mcS^c$. The semigroup $\mcS^c$ is analytic and contractive on $E^c$. Its generator $(\mcA^c,D(\mcA^c))$ coincides with the part $(A_p|_{E^c},D(A_p|_{E^c}))$ of the operator $(A_p,D(A_p))$ in $E^c$ for any $p\in[2,\infty]$.
\end{prop}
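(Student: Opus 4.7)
My plan is to exploit the consistency of the family $(T_p)_{p\in[2,\infty]}$ from Proposition \ref{prop:sgrextend} together with the one-dimensional smoothing of analytic semigroups, and to identify the generator via the Laplace transform.

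First I would establish invariance and coincidence of the restrictions. Fix $u\in E^c$. Since each $[0,\ell_{\me}]$ has finite measure, $E^c\hookrightarrow E_\infty\hookrightarrow E_p\hookrightarrow E_2$ for every $p\in[2,\infty]$, so by \eqref{eq:sgrconsistent} we have $T_p(t)u=T_2(t)u$ for every such $p$ and every $t>0$. Analyticity of $T_2$ gives $T_2(t)u\in D(A_2)\subset\prod_{\me\in\mE}H^2(0,\ell_{\me})$, and the one-dimensional Sobolev embedding $H^2(0,\ell_{\me})\hookrightarrow C[0,\ell_{\me}]$ yields $T_2(t)u\in E^c$. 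Hence all restrictions $T_p|_{E^c}$ coincide, so $\mcS^c(t)\coloneqq T_p(t)|_{E^c}$ is well defined, maps $E^c$ into itself for every $t>0$, and inherits the semigroup law on $(0,\infty)$.

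Next, for analyticity and contractivity, I would use that $E^c$ is a closed subspace of $E_\infty$ (uniform limits of continuous functions are continuous) on which the two norms coincide. Contractivity of $\mcS^c$ is then immediate from the contractivity of $T_\infty$ in Proposition \ref{prop:sgrextend}. For analyticity I would extend the previous invariance step to complex times: for $z$ in the sector of holomorphy of $T_2$, analyticity gives $T_2(z)u\in D(A_2^k)$ for every $k\in\nat$ and hence $T_2(z)u\in E^c$ by Sobolev embedding, so $T_\infty(z)$ leaves $E^c$ invariant throughout the sector. The restriction to the closed invariant subspace $E^c$ is then a bounded holomorphic family, which gives analyticity of $\mcS^c$ in the sense of \cite[Def.~3.7.1]{ABHN11}.

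Finally, for the generator I would use the Laplace-transform definition from \cite[Prop.~3.2.4]{ABHN11}, since $T_\infty$ (and hence $\mcS^c$) need not be strongly continuous at $0$, so the invariant-subspace argument via \cite[Prop.~II.2.3]{EN00} used in the proof of Lemma \ref{lem:Ap} is not directly available; this is the main subtlety of the proof. Fix $p\in[2,\infty]$ and take $\la>0$ sufficiently large. For $u\in E^c$, the identification $\mcS^c(t)u=T_p(t)u$ yields
\[
R(\la,\mcA^c)u=\int_0^\infty \e^{-\la t}\mcS^c(t)u\dt=R(\la,A_p)u,
\]
which lies in $D(A_p)\cap E^c$, and $A_pR(\la,A_p)u=\la R(\la,A_p)u-u\in E^c$, so $\ran R(\la,\mcA^c)\subset D(A_p|_{E^c})$. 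Conversely, any $u\in D(A_p|_{E^c})$ satisfies $u=R(\la,A_p)(\la u-A_pu)$ with $\la u-A_pu\in E^c$, whence $u\in\ran R(\la,\mcA^c)=D(\mcA^c)$. This gives $D(\mcA^c)=D(A_p|_{E^c})$ and $\mcA^c=A_p|_{E^c}$ for every $p\in[2,\infty]$, as claimed.
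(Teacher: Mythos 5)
Your proposal is correct and follows essentially the same route as the paper: the paper's proof also reduces everything to the inclusion $D(A_p)\subset E^c$ via Lemma \ref{lem:Ap} and the one-dimensional Sobolev embedding, and then defers the remaining steps (consistency of the restrictions, holomorphic extension to the sector, and the Laplace-transform identification of the generator as the part of $A_p$ in $E^c$) to the proof of \cite[Prop.~4.4]{KS21JEE}, which is exactly what you spell out explicitly. Your version is simply a self-contained write-up of that argument.
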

\begin{proof}
First we show that for all $p\in[2,+\infty]$, $D(A_p)\subset E^c$ holds. By Lemma \ref{lem:Ap} and Sobolev embedding we have that $D(A_p)\subset E^c$ holds if $p\in[2,+\infty)$. 
The inclusion $D(A_{\infty})\subset D(A_2)\subset E^c$ and the rest of the proof can be carried out in the same way as in the proof of \cite[Prop.~4.4]{KS21JEE}.
\end{proof}

We remark that the semigroup $\mcS^c$ is not strongly continuous on $E^c$ since $D(\mcA^c)\subset D(A_2)$ and $D(A_2)$ is not dense in $E^c$, see \eqref{eq:domAmax} and \eqref{eq:amain}.

In our main results the following closed subspace of $E^c$ plays a fundamental role.

\begin{defi}\label{defi:B}
We denote by
\begin{equation}\label{eq:B}
B\coloneqq \left\{u\in\prod_{\me\in\mE}C[0,\ell_{\me}]\colon I_{\mv}U(\mv)=0,\;\mv\in\mV\right\}, 
\end{equation} 
the space of continuous functions on $\mG$, see Definition \ref{defi:contG}. The space $B$ is clearly a closed subspace of $E^c$, hence a Banach space with norm
\begin{equation}\label{eq:normB}
\|u\|_{B}=\max_{\me\in\mE}\|u_{\me}\|_{C[0,\ell_{\me}]}=\|u\|_{E^c},\quad u\in B.
\end{equation} 
\end{defi}

\begin{prop}\label{prop:AonB}
The part of the generator $\left(A_p, D(A_p)\right)$ in $B$ is the same operator for each $p\in [2,+\infty]$ and this operator is the generator of a positive, strongly continuous, analytic contraction semigroup on $B$.
\end{prop}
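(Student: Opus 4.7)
The plan is to first verify that $\mcS^c$ leaves $B$ invariant, then to observe that the restriction $\mcS^B\coloneqq \mcS^c|_B$ inherits analyticity, contractivity and positivity automatically, and finally to tackle strong continuity, which will be the main obstacle.

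For invariance, take $u\in B\subseteq E^c$; analyticity of $\mcS^c$ forces $\mcS^c(t) u\in D(\mcA^c)$ for each $t>0$. By Proposition \ref{prop:AonEc}, $(\mcA^c, D(\mcA^c))$ is the part of $(A_p, D(A_p))$ in $E^c$ for every $p\in[2,\infty]$, hence $D(\mcA^c)\subseteq D(A_p)\subseteq B$; the last inclusion is because every element of $D(A_{p,\max})$ already satisfies $I_{\mv}U(\mv)=0$ by \eqref{eq:opApmax}. Thus $\mcS^c(t)B\subseteq B$ for all $t>0$.

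Once invariance is in place, $\mcS^B(t)\coloneqq\mcS^c(t)|_B$ is a well-defined family on $B$. Since the $B$-norm coincides with the $E^c$-norm by \eqref{eq:normB} and $B$ is closed in $E^c$, contractivity and analyticity transfer: $\|\mcS^B(t)\|_{\mathcal{L}(B)}\leq\|\mcS^c(t)\|_{\mathcal{L}(E^c)}\leq 1$, and the holomorphic extension of $\mcS^c$ to a sector $\Sigma_\theta$, combined with the same invariance argument $\mcS^c(z)B\subseteq B$ for $z\in\Sigma_\theta$, yields a holomorphic $\mathcal{L}(B)$-valued extension of $\mcS^B$. Positivity is inherited from Proposition \ref{prop:AonEc}. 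The generator $\mcA^B$ of $\mcS^B$ is then the part of $\mcA^c$ in $B$; using Proposition \ref{prop:AonEc} once more to identify $\mcA^c$ with the part of $A_p$ in $E^c$, a short check shows that $\mcA^B$ equals the part of $A_p$ in $B$ and that this operator is the same for every $p\in[2,\infty]$.

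The main effort will be strong continuity. By the theory of (possibly non-$C_0$) analytic semigroups in the sense of \cite[Def.~3.2.5]{ABHN11}, $\mcS^B$ is strongly continuous on $B$ if and only if $\mcA^B$ is densely defined, i.e.\ $B=\overline{D(\mcA^B)}^{E^c}$. The inclusion $\overline{D(\mcA^B)}^{E^c}\subseteq B$ is immediate from closedness of $B$; for the converse, the plan is to approximate $u\in B$ by $u_t\coloneqq T_2(t)u\in D(A_2^\infty)\subseteq D(\mcA^B)$ and to exploit the Gaussian upper bound on the integral kernel of $T_2$ invoked in the proof of Proposition \ref{prop:sgrextend}. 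A standard approximate-identity argument carried out edge by edge should then give $u_t\to u$ in $\|\cdot\|_{E^c}$ as $t\to 0^+$. The role of the assumption $u\in B$ is precisely that at each vertex $\mv$ the kernel averages $u$-values from all incident edges, and only the common vertex value guaranteed by $u\in B$ ensures the limit recovers $u(\mv)$; for $u\in E^c\setminus B$ this fails, consistent with $\mcS^c$ not being $C_0$ on $E^c$. I expect the hardest part to be making this kernel-based convergence rigorous near the vertices, where the Neumann--Kirchhoff condition $(\ref{eq:stochnet}c)$ interacts non-trivially with the graph geometry; everything else reduces to structural transfer from Proposition \ref{prop:AonEc}.
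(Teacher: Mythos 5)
Your treatment of invariance, contractivity, analyticity, positivity and the identification of the generator as the part of $A_p$ in $B$ (independent of $p$) is sound and matches the paper's route in substance: both arguments rest on $D(A_p)\subset B$, on analyticity to get $T_p(t)E_p\subset D(A_p)\subset B$, and on the fact that $B$ is closed in $E^c$ with the same norm so that the holomorphic contraction structure transfers. You also correctly isolate the crux: strong continuity on $B$ is equivalent to density of the domain of the part in $B$. Where you diverge is precisely there. The paper does not attempt a direct argument; it routes the density/strong-continuity claim through the properties of the form $\ea$ and the consistency of the semigroups $(T_p(t))_{t\ge 0}$, citing the proof of \cite[Prop.~2.1]{KS21Cor}.

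Your proposed substitute --- an approximate-identity argument for $T_2(t)u\to u$ in $\|\cdot\|_{E^c}$ based on the Gaussian upper bound of the kernel --- has a genuine gap, and it is not only "near the vertices". Writing $T_2(t)u(x)-u(x)=\int k_t(x,y)\bigl(u(y)-u(x)\bigr)\,dy+u(x)\bigl(\int k_t(x,y)\,dy-1\bigr)$, the Gaussian \emph{upper} bound together with uniform continuity of $u$ on $\mG$ controls the first term, but it gives no information on the second: an upper bound constrains the spread of the kernel, not its total mass. Here the semigroup is only sub-Markovian, not conservative (Assumption \ref{as:M}(3) gives $b_{ii}+\sum_{k\neq i}b_{ik}<0$ and $p_{\me}\geq 0$, so $T(t)\mathbf{1}\leq \mathbf{1}$ with strict loss of mass in general), so $\int k_t(x,\cdot)\to 1$ uniformly in $x$ is exactly as hard as the statement you are trying to prove and does not follow from the upper bound. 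To close this you would need either a matching lower bound/comparison with a conservative reference semigroup, or --- more in the spirit of the paper --- a direct density argument: given $u\in B$, construct approximants in $D(A_p)$ by modifying $u$ near each vertex so that both $I_{\mv}U(\mv)=0$ and the Kirchhoff condition $MLu+Cu=0$ hold (note $M$ is invertible, so one cannot simply flatten $u$ near the vertices; the derivatives must be tuned to $-M Lu$). That construction, or its abstract surrogate via the form $\ea$, is the content the paper imports from \cite{KS21Cor} and is the piece your plan still lacks.
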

\begin{proof}
In an analogous way as in the proof of Proposition \ref{prop:AonEc} we can verify that $D(A_p)\subset B$ holds for $p\in [2,+\infty]$. Using the analiticity of the semigroups $(T_p(t))_{t\geq 0}$ we obtain that the space $B$ is $T_p(t)$-invariant. Applying the properties of the form $\ea$ from Proposition \ref{prop:A2tul} and the analyticity and compatibility of the semigroups $\left(T_p(t)\right)_{t\geq 0}$ on the spaces $E_p$ (see Proposition \ref{prop:sgrextend}) we obtain in the same way as in the proof of \cite[Prop.~2.1]{KS21Cor} that the part of the generator $\left(A_p, D(A_p)\right)$ in $B$ is the same operator for each $p\in [2,+\infty]$ and it generates a strongly continuous contraction semigroup on $B$.  The analyticity follows by observing that $B$ is a closed subspace of $E^c$, and $\mcS^c$ is analytic on $E^c$.
\end{proof}

We recall that for $p\in [2,+\infty)$ the operators $(A_p,D(A_p))$ are generators of strongly continuous analytic semigroups on the spaces $E_p$ defined in \eqref{eq:Ep}  (see Proposition \ref{prop:sgrextend}).
\begin{equation}\label{eq:fractdomEp}
\text{For }0 \leq \theta< 1\text{ let }E_p^{\theta}\text{ be defined as in \eqref{eq:fractdom} for the operator }A_p\text{ on the space }E_p. 
\end{equation}

We will need the following embedding result regarding the fractional domain spaces and the space $B$.

\begin{prop}\label{prop:fractionalspaceinclB}
Let $E_p^{\theta}$ defined in \eqref{eq:fractdomEp}. Then for $p\in[2,\infty)$ and $\theta> \frac{1}{2p}$ the following continuous, dense embeddings are satisfied:
\begin{equation}\label{eq:EthetainclB}
E_p^{\theta}\hookrightarrow B\hookrightarrow E_p.
\end{equation} 
\end{prop}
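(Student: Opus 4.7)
The plan is to handle the two embeddings separately. The right embedding $B \hookrightarrow E_p$ is essentially immediate: one has $\|u\|_{E_p}^p \le \bigl(\sum_{\me\in\mE} \ell_\me\bigr)\|u\|_B^p$, and density follows by approximating any element of $E_p$ by tuples of smooth functions compactly supported in the open edges (such tuples trivially satisfy $I_\mv U(\mv)=0$ since they vanish near every vertex).

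For the left embedding $E_p^\theta \hookrightarrow B$ the main ingredient is the Gaussian upper bound already verified in Proposition \ref{prop:sgrextend}. In one space dimension this yields the ultracontractive estimate $\|T_p(t)\|_{\mathcal{L}(E_p,E_\infty)} \le C\, t^{-\frac{1}{2p}}$ for $t \in (0,1]$; combined with the contractivity of $T_p$ on $E_p$ and the semigroup property, one also has $\|T_p(t)\|_{\mathcal{L}(E_p,E_\infty)} \le C$ for $t \ge 1$. Combining this with the Balakrishnan-type representation
\[
(\omega'-A_p)^{-\theta} u = \frac{1}{\Gamma(\theta)}\int_0^\infty t^{\theta-1} e^{-\omega' t}\, T_p(t)u \dt, \qquad \theta \in (0,1),
\]
valid for any $\omega' > 0$ (such $\omega'$ lie in $\rho(A_p)$ by Propositions \ref{prop:A2tul} and \ref{prop:sgrextend}), one arrives at
\[
\|(\omega'-A_p)^{-\theta}u\|_{E_\infty} \le C\|u\|_{E_p}\Bigl(\int_0^1 t^{\theta-1-\frac{1}{2p}}\dt + \int_1^\infty t^{\theta-1} e^{-\omega' t}\dt\Bigr),
\]
which is finite precisely when $\theta > \frac{1}{2p}$. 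Hence $(\omega'-A_p)^{-\theta}\colon E_p \to E_\infty$ is bounded and $\|v\|_{E_\infty} \le C\|v\|_{E_p^\theta}$ for every $v \in E_p^\theta$.

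The remaining subtlety, which I expect to be the main obstacle, is upgrading the target space from $E_\infty$ to $B$: one must show that the Bochner integral above actually lands in the closed subspace $B$ of $E_\infty$. For this I plan to exploit the analyticity of $T_p$ together with the inclusion $D(A_p) \subset B$ provided by Lemma \ref{lem:Ap} combined with the one-dimensional Sobolev embedding $W^{2,p}(0,\ell_\me) \hookrightarrow C[0,\ell_\me]$ (elements of $D(A_{p,\max})$ satisfy $I_\mv U(\mv) = 0$ by construction). By analyticity, $T_p(t)u \in D(A_p) \subset B$ for every $t > 0$, and since the vertex evaluations $u \mapsto u_\me(\mv)$ are continuous on $E_\infty$, the space $B$ is closed in $E_\infty$. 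Any $E_\infty$-convergent Bochner integral of $B$-valued functions therefore stays in $B$, so one obtains $E_p^\theta \hookrightarrow B$ with $\|v\|_B = \|v\|_{E_\infty} \le C\|v\|_{E_p^\theta}$. For density of $E_p^\theta$ in $B$, I would invoke Proposition \ref{prop:AonB}: the part of $A_p$ in $B$ generates a $C_0$-semigroup on $B$, so its domain is dense in $B$, and that domain is contained in $D(A_p) \subset D((\omega'-A_p)^\theta) = E_p^\theta$.
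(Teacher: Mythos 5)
Your argument is correct, but it follows a genuinely different route from the paper. The paper never touches the heat kernel: it identifies $B\cong\bigl(\prod_{\me\in\mE}C_0[0,\ell_{\me}]\bigr)\times\real^n$ and $D(A_{p,\max})\cong\bigl(\prod_{\me\in\mE}W_0^{2,p}(0,\ell_{\me})\bigr)\times\real^n$, uses the coincidence of $D((\la-A_p)^{\theta})$ with the complex interpolation space $[D(\la-A_p),E_p]_{\theta}$ (valid here because the semigroup is positive and contractive on $E_p$), and then interpolates to get $E_p^{\theta}\hookrightarrow\bigl(\prod_{\me\in\mE}W_0^{2\theta,p}(0,\ell_{\me})\bigr)\times\real^n$, which lands in $B$ by the one--dimensional Sobolev embedding under the same condition $2\theta>\tfrac1p$. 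You instead combine the Gaussian upper bound from Proposition \ref{prop:sgrextend} (giving $\|T_p(t)\|_{\mathcal{L}(E_p,E_\infty)}\lesssim t^{-1/(2p)}$ for small $t$) with the Balakrishnan representation of $(\omega'-A_p)^{-\theta}$, and then push the integral into $B$ using analyticity, $D(A_p)\subset B$, and closedness of $B$ in $E_\infty$; the density arguments on both sides are the same as the paper's. Your route is more elementary in that it avoids the interpolation--space machinery, but it leans on the ultracontractive consequence of the Gaussian bound, which the paper only establishes implicitly, and it misses a by-product the paper needs later: the intermediate embedding \eqref{eq:EthetainclW2p} into $\prod_{\me}W_0^{2\theta,p}$ is reused in \eqref{eq:imath} and Lemma \ref{lem:G2prop}, so the interpolation argument is not merely a stylistic choice. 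Two small points to tighten: point evaluations are not defined on $E_\infty$ itself, so you should say that $E^c$ is isometrically a closed subspace of $E_\infty$ and $B$ is closed in $E^c$ (the conclusion that the Bochner integral of a $B$-valued integrand stays in $B$ is then fine); and you should note that the limits of the Balakrishnan integral in $E_\infty$ and in $E_p$ coincide (immediate since $E_\infty\hookrightarrow E_p$ on a finite graph), so that the element you exhibit in $B$ really is $(\omega'-A_p)^{-\theta}u$.
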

\begin{proof}
In an analogous way as done in \cite[Lem.~3.6]{KS21}, one can verify that
\begin{equation}\label{eq:Biso}
B\cong \left(\prod_{\me\in\mE}C_0[0,\ell_{\me}]\right)\times\real^n,
\end{equation}
where $C_0[0,\ell_{\me}]$ denotes the space of continuous functions on $[0,\ell_{\me}]$ that have a value of $0$ at the endponints of the interval.
By Proposition \ref{prop:sgrextend} the operator $(A_p,D(A_p))$ generates a positive, contraction semigroup on $E_p$.
It follows from \cite[Thm.~in $\mathsection$4.7.3]{Ar04} and \cite[Prop.~in $\mathsection$4.4.10]{Ar04} that for the complex interpolation spaces
\begin{equation}\label{eq:inclpf2}
D((\la-A_p)^{\theta})\cong [D(\la-A_p),E_p]_{\theta}
\end{equation}
holds for any $\la>0$. It is straightforward from \cite[Lem.~3.5]{KS21} that we have 
\begin{equation}\label{eq:Apmaxiso}
D(A_{p,\max})\cong \left(\prod_{\me\in\mE}W_0^{2,p}(0,\ell_{\me})\right)\times\real^n,
\end{equation}
see \eqref{eq:opApmax}. Observe that by Lemma \ref{lem:Ap}, for $p\geq 2$, $D(A_p)\hookrightarrow D(A_{p,\max})$ holds. Hence, from \eqref{eq:inclpf2}, \eqref{eq:Apmaxiso} and general interpolation theory, see e.g. \cite[Section 4.3.3]{Triebel78}, we obtain in the same way as in the proof of \cite[Cor.~3.1]{KS21Cor} that for $\theta>\frac{1}{2p}$
\begin{equation}\label{eq:EthetainclW2p}
E_p^{\theta}=D((\la-A_p)^{\theta}) \hookrightarrow \left(\prod_{\me\in\mE} W_{0}^{2\theta,p}(0,\ell_{\me})\right)\times\real^n
\end{equation}
is satisfied. Hence, by \eqref{eq:Biso} and Sobolev embedding we have that
\begin{equation}\label{eq:W02pinclB}
\left(\prod_{\me\in\mE} W_{0}^{2\theta,p}(0,\ell_{\me})\right)\times\real^n \hookrightarrow B,
\end{equation}
thus
\begin{equation}
E_p^{\theta} \hookrightarrow B
\end{equation}
holds with continuous embedding. Proposition \ref{prop:AonB} implies that $D(A_p)$, hence $E_p^{\theta}\cong D((-A_p)^{\theta})$ is dense in $B$. Since $\prod_{\me\in\mE}C_0[0,\ell_{\me}]\subset B$ is a dense subspace of $E_p$, we obtain that $B$ is continuously and densely embedded in $E_p$.
%Using again \eqref{eq:Biso} and $E_p\cong E_p\times \{0_{\real^n}\}$, we obtain that the space $B$ is continuously and densely embedded in $E_p$.
\end{proof}

As a corollary, we obtain the following chain of embeddings containing the space $E^c$. It is important to remark that the embeddings are continuous but the first one is not dense.

\begin{cor}\label{cor:fractionalspaceincl}
For $p\in[2,\infty)$ and $\theta> \frac{1}{2p}$ the following continuous embeddings are satisfied:
\begin{equation}\label{eq:Ethetaincl}
E_p^{\theta}\hookrightarrow E^c\hookrightarrow E_p.
\end{equation} 
\end{cor}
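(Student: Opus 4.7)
The plan is to deduce the corollary directly from Proposition \ref{prop:fractionalspaceinclB} by inserting the subspace $B$ as an intermediate step. For the first embedding, I would observe that $B$ is by definition a linear subspace of $E^c$ (it consists of those functions in $E^c$ that additionally satisfy the continuity relations $I_{\mv}U(\mv)=0$ at each vertex), and by \eqref{eq:normB} the norms coincide. Hence the identity map gives an isometric (in particular continuous) embedding $B\hookrightarrow E^c$. Composing with the continuous embedding $E_p^{\theta}\hookrightarrow B$ furnished by Proposition \ref{prop:fractionalspaceinclB} yields the first half of \eqref{eq:Ethetaincl}.

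For the second embedding $E^c\hookrightarrow E_p$, the plan is to use the elementary edgewise inequality $\|u_{\me}\|_{L^p(0,\ell_{\me})}\leq \ell_{\me}^{1/p}\|u_{\me}\|_{C[0,\ell_{\me}]}$, valid for $p\in[2,\infty)$ since each interval $[0,\ell_{\me}]$ is bounded. Summing the $p$-th powers over the finite edge set $\mE$ and using the definition \eqref{eq:normEc} of the $E^c$-norm gives a constant $C=\bigl(\sum_{\me\in\mE}\ell_{\me}\bigr)^{1/p}$ such that $\|u\|_{E_p}\leq C\|u\|_{E^c}$ for every $u\in E^c$, which is exactly the required continuous embedding.

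I do not expect any real obstacle here: the corollary is a routine consequence of Proposition \ref{prop:fractionalspaceinclB} together with the trivial fact that continuous functions on a bounded interval lie in $L^p$. The only substantive remark worth recording is that the density statement contained in Proposition \ref{prop:fractionalspaceinclB} does \emph{not} carry over: since the vertex conditions $I_{\mv}U(\mv)=0$ cut out a proper closed subspace of $E^c$ (as soon as some vertex has degree at least two), $B$ is not dense in $E^c$, and therefore neither is $E_p^{\theta}$. This matches the author's comment preceding the corollary, and is precisely the reason $E^c$ rather than $B$ is the correct candidate for the space $Z$ in Assumptions \ref{assum:main}(2), where density of $E^{\theta}\hookrightarrow Z$ is explicitly not required.
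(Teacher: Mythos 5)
Your proposal is correct and follows essentially the same route as the paper: the first embedding is obtained by composing $E_p^{\theta}\hookrightarrow B$ from Proposition \ref{prop:fractionalspaceinclB} with the isometric inclusion of the closed subspace $B$ into $E^c$, and the second is the elementary continuous (dense) embedding $E^c\hookrightarrow E_p$, which you merely spell out with an explicit constant. Your closing remark about the failure of density of $E_p^{\theta}$ in $E^c$ also matches the author's comment preceding the corollary.
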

\begin{proof}
Since $B\subset E^c$ a (closed) subspace by definition and \eqref{eq:EthetainclB} holds,
\[E_p^{\theta}\hookrightarrow E^c\]
is satisfied as a continuous embedding. The claim follows by observing $E^c\hookrightarrow E_p$ with continuous, dense embedding.
\end{proof}

%%%%%%%%%%%%%%%%%%%%%%%%%%%%%%%%%%%%%%%%%%%%%%%%%%%%%%
\subsection{Main results}\label{subsec:mainresults}
%%%%%%%%%%%%%%%%%%%%%%%%%%%%%%%%%%%%%%%%%%%%%%%%%%%%%%%

We now apply the results of the previous sections to the following stochastic evolution equation, based on \eqref{eq:netcp}. We prescribe stochastic noise on the edges of the network and add a nonlinear and a first order term to the first equation of \eqref{eq:netcp}.

Let $(\Omega,\mathscr{F},\mathbb{P})$ be a complete probability space endowed with a right-continuous filtration $\mathbb{F}=(\mathscr{F}_t)_{t\in [0,T]}$ for some $T>0$ given. We consider the problem
\begin{equation}\label{eq:stochsys}
\left\{\begin{aligned}
\dot{u}_{\me}(t,x) & =  (c_{\me} u_{\me}')'(t,x)+d_{\me}(x)\cdot u_{\me}'(t,x)&&\\
&-p_{\me}(x) u_{\me}(t,x)+f_{\me}(t,x,u_{\me}(t,x))&&\\
&+ h_{\me}(t,x,u_{\me}(t,x))\frac{\partial w_{\me}}{\partial t}(t,x), &x\in (0,\ell_{\me}),\; t\in(0,T],\; \me\in\mE,\;\; & (a)\\
0 & = I_{\mv}U(t,\mv),\;   &t\in(0,T],\; \mv\in\mV,\;\;& (b)\\
0 & = M L u(t)+ C u(t), & t\in(0,T],\;\;& (c)\\
u_{\me}(0,x) & =  u_{0,\me}(x), &x\in [0,\ell_{\me}],\; \me\in\mE.\;\;& (d)
\end{aligned}
\right.
\end{equation}

The terms $\frac{\partial w_{\me}}{\partial t}$ are independent space-time white noises on $[0,\ell_{\me}]$, $\me\in\mE$, written as formal derivatives of independent cylindrical Wiener-processes $(w_{\me}(t))_{t\in [0,T]}$, defined on $(\Omega,\mathscr{F},\mathbb{P})$, in the Hilbert space $L^2(0,\ell_{\me})$  with respect to the filtration $\mathbb{F}$.\smallskip

\begin{assum}\label{assum:fj}
The functions $f_{\me}\colon [0,T]\times \Omega\times [0,\ell_{\me}]\times \real\to\real$ are polynomials of the form
\begin{equation}\label{eq:fjdef}
f_{\me}(t,\omega,x,\eta)=-a_{\me}(t,\omega,x)\eta^{2k_{\me}+1}+\sum_{j=0}^{2k_{\me}}a_{{\me},j}(t,\omega,x)\eta^j,\quad \eta\in\real,\, {\me}\in\mE
\end{equation} 
for some positive integers $k_{\me},$ ${\me}\in\mE$.
For the coefficients we assume that there are constants $0<c\leq C<\infty$ such that
\begin{equation}\label{eq:assa_j}
c\leq a_{\me}(t,\omega,x)\leq C,\;\left|a_{{\me},j}(t,\omega,x)\right|\leq C,\text{ for each }{\me}\in\mE,\;j=0,\dots ,2k_{\me},
\end{equation}
for all $(t,\omega,x)\in [0,T]\times \Omega\times [0,\ell_{\me}]$. Furthermore, we suppose that
\[a_{\me}(t,\omega,\cdot),\,a_{{\me},j}(t,\omega,\cdot)\in C[0,\ell_{\me}],\quad {\me}\in\mE,\;j=0,\dots ,2k_{\me}\]
and that the coefficients $a_{\me},\,a_{{\me},j}\colon [0,T]\times \Omega\times [0,\ell_{\me}]\to\real$ are jointly measurable and adapted in the sense that for each ${\me}$ and $j$ and for each $t\in[0,T]$, the functions $a_{\me}(t,\cdot),\,a_{{\me},j}(t,\cdot)$ are $\mathscr{F}_t\otimes \mcB_{[0,\ell_{\me}]}$-measurable, where $\mcB_{[0,\ell_{\me}]}$ denotes the sigma-algebra of the Borel sets on $[0,\ell_{\me}].$
\end{assum}

\begin{rem}
In \cite[Ass.~3.10]{KS21} we had to assume a much more restrictive condition on the polynomials in \eqref{eq:fjdef}, namely that $k_{\me}=k$ for all $\me\in\mE$ and
\[\left(a_{\me}(t,\omega,\cdot)\right)_{\me\in\mE},\; \left(a_{\me,j}(t,\omega,\cdot)\right)_{\me\in\mE}\in B\]
for $j=1,\dots ,2k$, $t\in[0,T]$ and almost all $\omega\in\Omega$ holds. In our case these assumptions can be omitted.
\end{rem}

Define now
\begin{align}\label{eq:defkK}
K\coloneqq 2k_{\max}+1,\quad k\coloneqq 2k_{\min}+1,\\
\text{ where }k_{\max}=\max_{{\me}\in\mE}k_{\me},\quad k_{\min}=\min_{{\me}\in\mE}k_{\me}.\notag
\end{align}

We suppose that
\begin{align}%\label{eq:hjdef}
&h_{\me}\colon [0,T]\times \Omega\times [0,\ell_{\me}]\times \real\to\real,\quad {\me}\in\mE \text{ are locally Lipschitz continuous}\notag\\
&\text{in the fourth variable, uniformly with respect to the first 3 variables,}\label{eq:hjdefLip}\\
&|h_{\me}(t,\omega, x,\eta)|\leq c(1+|\eta|)^{\frac{k}{K}}\text{ for all }(t,\omega, x,\eta)\in [0,T]\times\Omega\times [0,\ell_{\me}]\times \real.\label{eq:hjdefnov}
\end{align}
We further assume that the functions $h_{\me}$ are jointly measurable and adapted in the sense that for each ${\me}$ and $t\in[0,T]$, $h_{\me}(t,\cdot)$ is $\mathscr{F}_t\otimes \mcB_{[0,\ell_{\me}]}\otimes \mcB_{\real}$-measurable, where $\mcB_{[0,\ell_{\me}]}$ and $\mcB_{\real}$ denote the sigma-algebras of the Borel sets on $[0,\ell_{\me}]$ and $\real$, respectively.\smallskip

We now rewrite system \eqref{eq:stochsys} in the form of an abstract stochastic Cauchy-problem, see \cite[p.~29]{KS21JEE}.

\begin{equation}\tag{SCPn}\label{eq:SCPn}
\left\{
\begin{aligned}
d X(t)&=[A X(t)+F(t,X(t))+\widetilde{F}(t,X(t))]dt+G(t,X(t))dW(t)\\
X(0)&=\xi.
\end{aligned}
\right.
\end{equation}

The operator $(A, D(A))$ is $(A_p, D(A_p))$ for some large $p\in [2,\infty)$, where $p$ will be chosen %in \eqref{eq:thetap} \eqref{eq:mainthmqp} and \eqref{eq:Holderregpfp}, \eqref{eq:mainthmBp2} 
later. Hence, by Proposition \ref{prop:sgrextend}, $A$ is the generator of the strongly continuous analytic semigroup on the Banach space $E_p$, and $E_p$ is a UMD space of type $2$, see \cite[Sec.~6]{AK16}. 

The function $F\colon [0,T]\times \Omega\times E^c\to E^c$ is defined as
\begin{equation}\label{eq:Fdef}
F(t,\omega,u)\coloneqq\left(f_{\me}(t,\omega,\cdot,u_{\me}(\cdot))\right)_{\me\in\mE},\quad t\in[0,T],\;\omega\in\Omega,\; u=\left(u_{\me}\right)_{\me\in\mE}\in E^c.
\end{equation}

Let
\begin{equation}\label{eq:Ftilde}
\begin{split}
\widetilde{F}&\colon\prod_{\me\in\mE}C^1[0,\ell_{\me}]\to E_p,\quad p>1,\\
\widetilde{F} u &\coloneqq \left(d_{\me}\cdot u'_{\me}\right)_{\me\in\mE}.
\end{split}
\end{equation}

To define the operator $G$ we argue in analogy with \cite[p.~29--30.]{KS21JEE}, but we have to change the argumentation at one point because in \eqref{eq:EthetainclW2p} we have only (continuous) embedding but no isomorphism. We first define the multiplication operator $\Gamma\colon [0,T]\times \Omega\times E^c\to\mathcal{L}(E_2)$ as
\begin{equation}\label{eq:Gammadef}
\Gamma(t,\omega,u)y\coloneqq \diag \left(h_{\me}(t,\omega, \cdot,u_{\me}(\cdot)) \right)_{\me\in\mE}\cdot y,\quad u\in E^c,\; y\in E_2.
\end{equation}
Because of the assumptions on the $h_{\me}$'s, $\Gamma$ clearly maps into $\mathcal{L}(E_2).$

Let $(A_2,D(A_2))$ be the generator on $E_2,$ see Proposition \ref{prop:sgrextend}, and pick $\kG\in(\frac{1}{4},\frac{1}{2})$.
%By Proposition \ref{prop:fractionalspaceincl}, we have that there is a continuous embedding \essi{!!!}
%\begin{equation}\label{eq:imath}
%\imath\colon E_2^{\kG} \to E^c.
%\end{equation}
%Again, by Proposition \ref{prop:fractionalspaceincl}, we have that the for $p\geq 2$ arbitrary there exists a continuous embedding
%\begin{equation}\label{eq:jmath}
%\jmath\colon E^c \to E_p.
%\end{equation}
By \eqref{eq:EthetainclW2p}, we have that there is a continuous embedding
\begin{equation}\label{eq:imath}
\imath\colon E_2^{\kG} \to \left(\prod_{\me\in\mE} H_{0}^{2\kG}(0,\ell_{\me})\right)\times\real^n\eqqcolon H_1.
\end{equation}
Using \eqref{eq:W02pinclB} and \eqref{eq:EthetainclB}, we obtain that for $p\geq 2$ arbitrary there exists a continuous embedding
\begin{equation}\label{eq:jmath}
\jmath\colon H_1 \to E_p.
\end{equation}
Let $\nu>0$ arbitrary and define now $G$ by
\begin{equation}\label{eq:G2def}
(\nu- A_p)^{-\kG} G(t,\omega, u)y\coloneqq \jmath\, \imath\,  (\nu- A_2)^{-\kG}\Gamma(t,\omega,u)y,\quad u\in E^c,\; y\in E_2.
\end{equation}

\begin{lemma}\label{lem:G2prop}
\hspace{2em}
\begin{enumerate}[1.]
	\item Let $p>1$ arbitrary. Then the mapping defined in \eqref{eq:Ftilde} can be extended to a linear and continuous operator from $E^c$ into $E_p^{-\frac{1}{2}}$, that we also call $\widetilde{F}$.
	\item Let $p\geq 2$ and $\kG\in(\frac{1}{4},\frac{1}{2})$ be arbitrary. The operator $G$ defined in \eqref{eq:G2def} maps $[0,T]\times\Omega\times E^c$ into $\gamma (E_2,E_p^{-\kappa_{G}})$.
\end{enumerate}
\end{lemma}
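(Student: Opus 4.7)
For $u\in E^c\hookrightarrow E_p$ with $p\geq 2$, I would interpret the derivative distributionally. The distributional derivative $\partial\colon L^p(0,\ell_\me)\to W^{-1,p}(0,\ell_\me)$ (with $W^{-1,p}\coloneqq(W_0^{1,p'})^*$, $p'$ the conjugate exponent) is continuous, so writing $d_\me u_\me'=(d_\me u_\me)'-d_\me' u_\me$ (which makes sense distributionally because $d_\me\in\mathrm{Lip}$ gives $d_\me u_\me\in L^p$, while $d_\me'\in L^\infty$ gives $d_\me' u_\me\in L^p\hookrightarrow W^{-1,p}$) yields a continuous linear map $\widetilde F\colon E_p\to\prod_{\me\in\mE}W^{-1,p}(0,\ell_\me)$ agreeing with \eqref{eq:Ftilde} on $\prod_\me C^1[0,\ell_\me]$. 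To push the target into $E_p^{-1/2}$, I apply Proposition \ref{prop:fractionalspaceinclB} to the conjugate exponent $p'$ with $\theta=\tfrac12$, getting $E_{p'}^{1/2}\hookrightarrow \bigl(\prod_\me W_0^{1,p'}(0,\ell_\me)\bigr)\times\real^n$. Dualising and using self-adjointness of $A_2$ (Proposition \ref{prop:A2tul}), so that $A_p^*=A_{p'}$ and $E_p^{-1/2}\cong(E_{p'}^{1/2})^*$, one obtains $\prod_\me W^{-1,p}(0,\ell_\me)\hookrightarrow E_p^{-1/2}$ continuously, which completes the argument.

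\textbf{Plan for part 2.} Since $(\nu-A_p)^{\kG}\colon E_p\to E_p^{-\kG}$ is an isomorphism, the condition $G(t,\omega,u)\in\gamma(E_2,E_p^{-\kG})$ is equivalent, by the ideal property of the class of $\gamma$-radonifying operators, to
\[ R(t,\omega,u)\coloneqq \jmath\,\imath\,(\nu-A_2)^{-\kG}\Gamma(t,\omega,u)\in\gamma(E_2,E_p). \]
The growth bound \eqref{eq:hjdefnov} gives $\|\Gamma(t,\omega,u)\|_{\mathcal{L}(E_2)}\leq c(1+\|u\|_{E^c})^{k/K}$, so by the ideal property it suffices to prove $S\coloneqq\jmath\,\imath\,(\nu-A_2)^{-\kG}\in\gamma(E_2,E_p)$. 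For $\kG>\tfrac14$, Weyl's asymptotics $\lambda_n(A_2)\sim n^2$ on a one-dimensional metric graph give $\sum(\nu+\lambda_n)^{-2\kG}<\infty$, so $(\nu-A_2)^{-\kG}\colon E_2\to E_2$ is already Hilbert--Schmidt. I would then combine this with the integral kernel representation of the fractional resolvent, whose kernel $K_\kG(x,y)$ on each edge behaves like $|x-y|^{2\kG-1}$ and satisfies $\sup_x\|K_\kG(x,\cdot)\|_{L^2_y}<\infty$ for $\kG>\tfrac14$, together with the Sobolev embeddings encoded in $\imath$ and $\jmath$ and the type $2$ property of $E_p$ ($p\geq 2$), to conclude via the standard $L^p$-valued characterisation $\|K(x,\cdot)\|_{L^2_y}\in L^p_x$ that $S$ is $\gamma$-radonifying into $E_p$.

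\textbf{Main obstacle.} The technical crux is the passage from Hilbert--Schmidtness of $(\nu-A_2)^{-\kG}$ on $E_2$ to $\gamma$-radonifyingness of $S$ into $E_p$ when $p>2$: a naive composition with $\jmath\,\imath\colon E_2^\kG\to E_p$ fails because $\|\jmath\,\imath f_n\|_{E_p}$ is bounded from below along any orthonormal basis of $E_2^\kG$, so the resulting series is not summable. The way around this is to exploit the specific one-dimensional edge structure through the kernel estimate above, which is exactly where the interval of admissible $\kG\in(\tfrac14,\tfrac12)$ and the freedom of $p\geq 2$ cooperate; this is in close analogy to the construction of the corresponding operator $G$ in \cite{KS21JEE}.
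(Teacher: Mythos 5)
Part 1 of your proposal is essentially the paper's argument: the paper also rewrites $d_{\me}u_{\me}'$ so as to land in $\bigl(\prod_{\me}W_0^{1,q}(0,\ell_{\me})\bigr)^*$ with $q$ the conjugate exponent (deferring this step to \cite[Lem.~4.6.1]{KS21JEE}), and then uses exactly your duality chain $(E_q^{1/2})^*\cong E_p^{-1/2}$ together with the embedding \eqref{eq:EthetainclW2p} applied to the exponent $q$. No substantive difference there.

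For part 2, however, your ``main obstacle'' paragraph misdiagnoses the situation, and the misdiagnosis leads you away from the short proof. You claim that the composition with $\jmath\,\imath\colon E_2^{\kG}\to E_p$ cannot work because $\|\jmath\,\imath f_n\|_{E_p}$ stays bounded below along an orthonormal basis of $E_2^{\kG}$. That claim is false: if $(f_n)$ is an orthonormal basis of $E_2^{\kG}$, its image in $H_1=\bigl(\prod_{\me}H_0^{2\kG}(0,\ell_{\me})\bigr)\times\real^n$ is bounded, and the Sobolev embedding $H_0^{2\kG}(0,\ell_{\me})\hookrightarrow L^p(0,\ell_{\me})$ on a one-dimensional interval is itself $\gamma$-radonifying precisely when $2\kG>\tfrac12$, i.e.\ $\kG>\tfrac14$ --- this is \cite[Lem.~2.1(4), Cor.~2.2]{vNVW08}, and it is the entire content of the paper's proof: $\jmath\in\gamma(H_1,E_p)$, and then the two-sided ideal property applied to $\jmath\circ\bigl(\imath\,(\nu-A_2)^{-\kG}\Gamma(t,\omega,u)\bigr)$ finishes the argument (in particular the images of $f_n$ under $\jmath\imath$ tend to $0$ in $E_p$, contrary to your assertion). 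Your alternative route --- Weyl asymptotics plus the kernel bound $|K_{\kG}(x,y)|\lesssim|x-y|^{2\kG-1}$ and the square-function characterisation of $\gamma(E_2,E_p)$ --- can in principle be carried out, since the Gaussian upper bounds behind Proposition \ref{prop:sgrextend} do yield such a kernel estimate and $\sup_x\|K_{\kG}(x,\cdot)\|_{L^2}<\infty$ for $\kG>\tfrac14$; but you leave precisely this step as a sketch, the Hilbert--Schmidt observation on $E_2$ plays no role in it (boundedness of $E_2\to E_2$ does not compose into $E_p$ for $p>2$), and the whole detour is unnecessary once one knows the $\gamma$-radonifying Sobolev embedding. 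Please correct the erroneous claim and either invoke \cite[Cor.~2.2]{vNVW08} directly or supply the kernel estimate in full.
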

\begin{proof}
\begin{enumerate}[1.]
	\item Let $p> 1$ arbitrary and define $q\coloneqq (1-\frac{1}{p})^{-1}$. Proceeding in the same way as in the proof of \cite[Lem.~4.6.1]{KS21JEE} we obtain that there exists a continuous extension of $\widetilde{F}$
\begin{equation}%\label{eq:}
\widetilde{F}_1\colon E^c \to \left(\prod_{\me\in\mE}W_0^{1,q}(0,\ell_{\me})\right)^*.
\end{equation}
By \cite[Thm.~3.1.4]{vNeervenbook} we have
\[\left(E_q^{\frac{1}{2}}\right)^*\cong E_p^{-\frac{1}{2}}.\]
By \eqref{eq:EthetainclW2p}
\begin{equation}%\label{}
 E_q^{\frac{1}{2}} \hookrightarrow \left(\prod_{\me\in\mE} W_{0}^{1,q}(0,\ell_{\me})\right)\times\real^n.
\end{equation}
holds. Hence there exists a continuous embedding
\begin{equation}%\label{eq:}
\imath_p\colon \left(\prod_{\me\in\mE} W_{0}^{1,q}(0,\ell_{\me})\right)^*\times\real^n \to \left(E_q^{\frac{1}{2}}\right)^*\cong E_p^{-\frac{1}{2}}.
\end{equation}
Combining all these fact we obtain that
\begin{equation}%\label{eq:}
\imath_p\widetilde{F}_1\colon E^c \to E_p^{-\frac{1}{2}}
\end{equation}
is a continuous linear extension of $\widetilde{F}$.
\item Using \cite[Lem.~2.1(4)]{vNVW08}, we obtain in a similar way as in \cite[Cor.~2.2]{vNVW08}) that $\jmath \in\gamma (H_1,E_p)$, since $2\kG>\frac{1}{2}$ holds. Hence, by the definition of $G$ and the ideal property of $\gamma$-radonifying operators, the mapping $G$ takes values in $\gamma (E_2,E_p^{-\kG})$.
\end{enumerate}
\end{proof}

The driving noise process $W$ is defined by
\begin{equation}\label{eq:W2def}
W(t)=\left(	w_{\me}(t,\cdot)\right)_{\me\in\mE}
\end{equation}
and thus $(W(t))_{t\in [0,T]}$ is a cylindrical Wiener process, defined on $(\Omega,\mathscr{F},\mathbb{P})$, in the Hilbert space $E_2$ with respect to the filtration $\mathbb{F}$.

Finally, let
\[\xi\coloneqq u_0.\]\smallskip

We recall that a mild solution of \eqref{eq:SCPn} is a solution of the following integral equation
\begin{align}\label{eq:mildsol}
X(t)&=S(t)\xi+\int_0^tS(t-s)\left(F(s,X(s))+\widetilde{F}(s,X(s))\right)\ds\notag\\
&+\int_0^tS(t-s)G(s,X(s))\,dW(s)\notag\\
&\eqqcolon S(t)\xi+S\ast F(\cdot,X(\cdot))(t)+S\ast \widetilde{F}(\cdot,X(\cdot))(t)+S\diamond G(\cdot,X(\cdot))(t)
\end{align}
where
\[S\ast f(t)=\int_0^tS(t-s)f(s)\ds\]
denotes the "usual" convolution, and
\[S\diamond g(t)=\int_0^tS(t-s)g(s)\,dW(s)\]
denotes the stochastic convolution with respect to $W$ and $\left(S(t)\right)_{t\geq 0}$ denotes the strongly continuous semigroup generated by $(A,D(A))$. We also implicitly assume that all the terms on the right hand side of  \eqref{eq:mildsol} are well-defined.
\medskip

To state our the result regarding system \eqref{eq:SCPn}, we define to the analogy of $V_{T,q}$ (see \eqref{eq:V_T}) for a fixed $T>0$ and $q\geq 1$

\begin{equation}\label{eq:V_Thullam}
\widetilde{V}_{T,q}\coloneqq L^q\left(\Omega;C((0,T];B)\cap L^{\infty}(0,T;B)\right)
\end{equation}
being a Banach space with norm
\begin{equation}\label{eq:V_Thullamnorm}
\left\|u\right\|^q_{\widetilde{V}_{T,q}}\coloneqq \mathbb{E}\sup_{t\in [0,T]}\|u(t)\|_{B}^q,\quad u\in \widetilde{V}_{T,q}.
\end{equation}

\begin{theo}\label{theo:SCPnsolcontB}
Let $F$, $\widetilde{F}$, $G$ and $W$ defined as in \eqref{eq:Fdef}, \eqref{eq:Ftilde}, \eqref{eq:G2def} and \eqref{eq:W2def}, respectively. 
Let $q>4$ be arbitrary.
Then for every $\xi\in L^q(\Omega,\mathscr{F}_0,\mathbb{P};E^c)$ equation \eqref{eq:SCPn} has a unique global mild solution in $\widetilde{V}_{T,q}$.
\end{theo}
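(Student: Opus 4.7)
The plan is to apply the abstract result Theorem \ref{theo:JEE} to \eqref{eq:SCPn} with ambient space $E := E_p$ for a sufficiently large $p \in [2, \infty)$, generator $A := A_p$ (see Proposition \ref{prop:sgrextend}), and intermediate space $Z := E^c$. This will produce a unique mild solution in $V_{T,q}$. In a second step I would upgrade to a solution in $\widetilde{V}_{T,q}$ by showing that the trajectory actually takes values in the closed subspace $B \subset E^c$ for $t > 0$; since $\|\cdot\|_B = \|\cdot\|_{E^c}$ on $B$ by \eqref{eq:normB}, continuity in $B$-norm will follow automatically from continuity in $E^c$-norm.

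The first task is to fix the interpolation parameters so that the conditions of Theorem \ref{theo:JEE} are satisfiable. By Lemma \ref{lem:G2prop}(1) we may take $\kappa_F = 1/2$, forcing $\theta < 1/2$. Since $q > 4$ gives $1/2 - 1/q > 1/4$, one may pick $\kappa_G \in (1/4, 1/2 - 1/q)$, and then $\theta$ with $1/(2p) < \theta < 1/2 - 1/q - \kappa_G$ provided $p$ is taken large enough. With these choices, Assumption \ref{assum:main}(1) follows from the UMD/type-$2$ property of $E_p$ combined with Proposition \ref{prop:sgrextend}; (2) is Corollary \ref{cor:fractionalspaceincl}; (3) is Proposition \ref{prop:AonEc}; (6) is Lemma \ref{lem:G2prop}(1) (linear operator, hence globally Lipschitz with linear growth); and (7) follows from Lemma \ref{lem:G2prop}(2) together with \eqref{eq:hjdefLip} and \eqref{eq:hjdefnov}, the growth exponent being exactly $k/K$ by \eqref{eq:defkK}.

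The main obstacle is the verification of the dissipativity hypotheses (4) and (5) for the polynomial reaction term $F$ in the sup-norm of $E^c$. The subdifferential $\partial \|u\|_{E^c}$ consists of signed point evaluations at maximizing edges and coordinates, which reduces both inequalities to pointwise estimates on a single edge. For (5), at a maximizing point $x_0 \in [0, \ell_{\me^*}]$ the leading term of $f_{\me^*}(t, x_0, u + v) - f_{\me^*}(t, x_0, v)$ paired with $\sgn(u(x_0))$ produces, using the strictly positive lower bound on $a_{\me^*}$ in \eqref{eq:assa_j}, a contribution of order $-b''|u(x_0)|^{2k_{\me^*}+1} \leq -b'' \|u\|^{k}$, while the subleading polynomial contributions are absorbed in $a''(1+\|v\|)^K$ using \eqref{eq:assa_j} and the definition \eqref{eq:defkK}. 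For (4), dissipativity of $A_p$ in the sup-norm follows from the contractivity of $\mcS^c$ on $E^c$ (Proposition \ref{prop:AonEc}); combining this with the one-sided polynomial bound on $F$ yields the estimate. These arguments run parallel to \cite[Sec.~4.2]{KS21JEE}, but are simplified here by the absence of vertex dynamics.

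With the mild solution $X \in V_{T,q}$ in hand, it remains to show that $X(t) \in B$ for every $t > 0$ almost surely. Inspecting the four terms of \eqref{eq:mildsol}: $S(t)\xi \in D(\mcA^c) \subset D(A_p) \subset B$ by analyticity of $\mcS^c$ (Proposition \ref{prop:AonEc}); the deterministic convolution with $F(\cdot, X(\cdot))$ lies in $B$ since $B$ is closed in $E^c$ and each instantaneous image $S(t-s)F(s,X(s))$ belongs to $D(A_p) \subset B$; for the deterministic convolution with $\widetilde{F}(\cdot, X(\cdot))$ and for the stochastic convolution one invokes the factorization method in $E_p$ (exactly as in \cite{KS21JEE}) with the above parameters $\theta, \kappa_F, \kappa_G$ to place these terms into $E_p^{\theta}$, which embeds continuously into $B$ by Proposition \ref{prop:fractionalspaceinclB}. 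Hence the entire trajectory lies in $B$ for $t > 0$, continuity in $B$ on $(0,T]$ is inherited from continuity in $E^c$, and the $L^q(\Omega)$-bound on $\|X\|_{V_{T,q}}$ from Theorem \ref{theo:JEE} transfers directly to $\|X\|_{\widetilde{V}_{T,q}}$, completing the proof.
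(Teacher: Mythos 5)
Your proposal is correct and follows essentially the same route as the paper: apply Theorem \ref{theo:JEE} with $E=E_p$, $A=A_p$, $Z=E^c$, $\kF=\tfrac12$ and parameters chosen as in \eqref{eq:pthetakappaG}, verifying Assumptions \ref{assum:main} via Propositions \ref{prop:sgrextend}, \ref{prop:AonEc}, Corollary \ref{cor:fractionalspaceincl} and Lemma \ref{lem:G2prop}, and then upgrading the $V^c_{T,q}$-solution to $\widetilde{V}_{T,q}$ term by term in \eqref{eq:mildsol} using the embedding $E_p^{\theta}\hookrightarrow B$ of Proposition \ref{prop:fractionalspaceinclB}. The only difference is that you spell out the subdifferential/pointwise argument for Assumptions (4)--(5), which the paper delegates to the proof of \cite[Thm.~4.7]{KS21JEE}.
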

\begin{proof}
Using the assumption $q > 4$, we can choose $2\leq p<+\infty$, $\theta\in (0,\frac{1}{2})$ and $\kG\in (\frac14 ,\frac12)$ such that
\begin{equation}\label{eq:pthetakappaG}
\theta>\frac{1}{2p}\text{ and } 0 < \theta+ \kG < \frac12 - \frac1q.
\end{equation} 
First we will apply Theorem \ref{theo:JEE} and show that there exists a unique global mild solution in 
\[V^c_{T,q}\coloneqq L^q\left(\Omega;C((0,T];E^c)\cap L^{\infty}(0,T;E^c)\right).\] 
For this purpose we have to show that Assumptions \ref{assum:main} are satisfied with the following cast: $E=E_p$, $H=E_2$, $Z=E^c$, $A=A_p$, the operators defined in \eqref{eq:Fdef}, \eqref{eq:Ftilde}, \eqref{eq:G2def} and \eqref{eq:W2def}, for appropriate $\kF$ and the constants $p$, $\theta$ and $\kG$ chosen in \eqref{eq:pthetakappaG}.
\begin{enumerate}[(a)]
	\item Assumption $(1)$ is satisfied because of the generator property of $A_p$ on $E_p$, see Proposition \ref{prop:sgrextend} and since any $L^p$-space with $p\in[2,\infty)$ has type $2$.
	\item Assumption $(2)$ is satisfied since \eqref{eq:pthetakappaG} holds and we can use Corollary \ref{cor:fractionalspaceincl}.
	\item Assumption $(3)$ is satisfied because of Proposition \ref{prop:AonEc}.
	\item To prove Assumptions $(4)$ and $(5)$ we can proceed in the same way as in the proof of \cite[Thm.~4.7]{KS21JEE} and obtain that $(4)$ holds with $N=K$ and $(5)$ with $K$ and $k$ defined in \eqref{eq:defkK}.
	\item To check Assumption (6) we refer to Lemma \ref{lem:G2prop}. This implies that $\widetilde{F}\colon E^c\to E^{-\kF}$ with $\kF=\frac{1}{2}$. Since $\widetilde{F}$ is a continuous linear operator, the rest of the statement also follows.
	\item To check Assumption (7) we can mimick the proof of \cite[Thm.~4.7]{KS21JEE} in a sligthly modified way. First note that by Lemma \ref{lem:G2prop}, $G$ takes values in $\gamma (E_2,E_p^{-\kG})$ with $\kG$ chosen above. We fix $u,v\in E^c$ and let
\[R\coloneqq \max\left\{\|u\|_{E^c},\|v\|_{E^c}\right\}.\]
Furthermore, we denote the matrix from \eqref{eq:Gammadef} by
\[\mathcal{M}_{\Gamma}(t,\omega,u)\coloneqq \diag \left(h_{\me}(t,\omega, \cdot,u_{\me}(\cdot)) \right)_{\me\in\mE},\quad\text{ for } u\in E^c.\]
For $R>0$ let
\begin{equation}%\label{eq:}
L_h(R)\coloneqq\max_{\me\in\mE}L_{h_{\me}}(R),
\end{equation}
where the positive constants $L_{h_{\me}}(R)$ are the Lipschitz constants of the functions $h_{\me}$, on the ball of radius $R$, see \eqref{eq:hjdefLip}.
From the right-ideal property of the $\gamma$-radonifying operators and \eqref{eq:G2def} we have that
\begin{align}
&\left\|(-A_p)^{-\kG}\left(G(t,\omega,u)-G(t,\omega,v)\right)\right\|_{\gamma (E_2,E_p)}\\
&\leq \left\|\jmath\, \imath\,   (-A_2)^{-\kG}\right\|_{\gamma (E_2,E_p^{-\kappa_{G}})}\cdot\left\|\Gamma(t,\omega,u)-\Gamma(t,\omega,v)\right\|_{\mathcal{L}(E_2)}\\
&\leq \left\|\jmath\, \imath\,   (-A_2)^{-\kG}\right\|_{\gamma (E_2,E_p^{-\kappa_{G}})}\cdot\left\|\mathcal{M}_\Gamma(t,\omega,u)-\mathcal{M}_\Gamma(t,\omega,v)\right\|_{L^{\infty}\left(\prod_{\me\in\mE}[0,\ell_{\me}],\real^m\right)}\\
&\preceq \left\|\jmath\, \imath\,   (-A_2)^{-\kG}\right\|_{\gamma (E_2,E_p^{-\kappa_{G}})}\cdot L_h(R)\cdot \left\|u-v\right\|_{E^c}.
\end{align}
Hence, we obtain that $G:[0,T]\times E^c\to \gamma (E_2,E_p^{-\kG})$ is locally Lipschitz continuous.\\
Using assumption \eqref{eq:hjdefnov} on the functions $h_{\me}$ and an analogous computation as above, we obtain that $G$ grows as required in Assumption $(7)$ as a map $[0,T]\times E^c\to \gamma (E_2,E_p^{-\kG})$.
\end{enumerate}
Thus, we can apply Theorem \ref{theo:JEE} with $\kF=\frac12$, $\theta$ and $\kG$ having the properties \eqref{eq:pthetakappaG}, and obtain that \eqref{eq:SCPn} has a unique global mild solution in $V^c_{T,q}$.

To prove the claim for $\widetilde{V}_{T,q}$ recall from \eqref{eq:mildsol} that the solution in $V^c_{T,q}$ satisfies the following implicit equation:
\begin{equation}\label{eq:proofmildsol}
X(t)=S(t)\xi+S\ast F(\cdot,X(\cdot))(t)+S\ast \widetilde{F}(X(\cdot))(t)+S\diamond G(\cdot,X(\cdot))(t),
\end{equation}
where $S$ denotes the semigroup generated by $A_p$ on $E_p$. We only have to show that for almost all $\omega\in\Omega$ for the trajectories 
\begin{equation}\label{eq:Holderregpf2}
X(\cdot)\in C((0,T];B)\cap L^{\infty}(0,T;B)
\end{equation} 
holds. Then $X\in \widetilde{\mcV}_{T,q}$ is satisfied since the norms on $E^c$ and $B$ coincide and $X\in V^c_{T,q}$ is true. Using Proposition \ref{prop:fractionalspaceinclB}, Assumptions \ref{assum:main} and techniques from \cite{vNVW08}, we can proceed in the same way as in the proof of \cite[Thm.~4.10]{KS21JEE} to show \eqref{eq:Holderregpf2} by showing it for all the four terms on the right-hand-side of \eqref{eq:proofmildsol}.
\end{proof}

Notice that in the above theorem, for initial values from $E^c$ we obtain trajectories in the (smaller space) $B$.

\begin{rem}\label{rem:SCPnsolcont0TB}
If in Theorem \ref{theo:SCPnsolcontB} the initial condition satisfies $\xi\in L^{q}(\Omega,\mathscr{F}_0,\mathbb{P};B)$ for $q>4$, then the global mild solution belongs even to $L^q(\Omega;C([0,T];B))$ instead of $\widetilde{V}_{T,q}$. This follows from the fact that by Proposition \ref{prop:AonB}, the part of $A_p$ in $B$ generates a strongly continuous semigroup on $B$.
\end{rem}

In the following theorem we will state a result regarding H\"older regularity of the mild solution of \eqref{eq:SCPn}.

\begin{theo}\label{theo:Holderreg}
Let $q > 4$ be arbitrary, $\lambda, \eta>0$ and $p\geq 2$ such that $\lambda+\eta>\frac{1}{2p}$. We assume that $\xi\in L^{Kq}(\Omega;E_p^{\lambda+\eta})$, where $K$ is defined in \eqref{eq:defkK}. If the inequality
\begin{equation}\label{eq:thmHolderreglaeta}
\lambda+\eta<\frac{1}{4}-\frac{1}{q}
\end{equation} 
is fulfilled, then the mild solution $X$ of \eqref{eq:SCPn} from Theorem \ref{theo:SCPnsolcontB} satisfies 
\[X\in L^q(\Omega;C^{\lambda}([0,T],E_p^{\eta})).\]
\end{theo}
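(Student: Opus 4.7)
The plan is to analyze the four terms in the mild solution representation
\begin{equation}
X(t)=S(t)\xi+S\ast F(\cdot,X(\cdot))(t)+S\ast \widetilde F(X(\cdot))(t)+S\diamond G(\cdot,X(\cdot))(t)
\end{equation}
separately, bounding each one in $L^q(\Omega;C^{\lambda}([0,T];E_p^{\eta}))$. Throughout I would work in the scale $E_p^\alpha$ and exploit Proposition~\ref{prop:sgrextend} together with standard properties of analytic semigroups: for $0\le\alpha\le\beta<1$ and $t>s$, $\|(S(t)-S(s))x\|_{E_p^\alpha}\lesssim (t-s)^{\beta-\alpha}\|x\|_{E_p^\beta}$, and $\|S(t)\|_{\mathcal L(E_p^{-\gamma},E_p^{\alpha})}\lesssim t^{-\alpha-\gamma}$.

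For the deterministic terms I would argue as follows. The semigroup term $S(t)\xi$ is $\lambda$-Hölder into $E_p^\eta$ by the hypothesis $\xi\in L^{Kq}(\Omega;E_p^{\lambda+\eta})$ and the above analytic-semigroup estimate. For $S\ast F(\cdot,X(\cdot))$ I would use Theorem~\ref{theo:SCPnsolcontB}, which gives $X\in V_{T,q}$, together with the growth bound in Assumption~\ref{assum:main}(5) with exponent $K$. Since $\xi\in L^{Kq}$, a bootstrap of the fixed-point argument yields $X\in V_{T,Kq}$, so $F(\cdot,X(\cdot))\in L^q(\Omega;L^\infty(0,T;E^c))\hookrightarrow L^q(\Omega;L^\infty(0,T;E_p))$. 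Then a classical Da~Prato estimate (see e.g.\ \cite[Thm.~4.10]{KS21JEE}) gives that the convolution $S\ast F(\cdot,X(\cdot))$ lies in $C^{\lambda}([0,T];E_p^{\eta})$ for any $\lambda+\eta<1$, which is much weaker than the standing hypothesis \eqref{eq:thmHolderreglaeta}. The term $S\ast \widetilde F(X(\cdot))$ is handled analogously using Lemma~\ref{lem:G2prop}(1): $\widetilde F\colon E^c\to E_p^{-1/2}$ is continuous and linear, so $\widetilde F(X(\cdot))\in L^q(\Omega;L^\infty(0,T;E_p^{-1/2}))$, and the convolution gains $1$ derivative, placing it in $C^\lambda([0,T];E_p^\eta)$ whenever $\lambda+\eta<1/2$, again guaranteed by \eqref{eq:thmHolderreglaeta}.

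The main obstacle, as usual, is the stochastic convolution $S\diamond G(\cdot,X(\cdot))$. Here I would invoke the factorization method of Da~Prato--Kwapień--Zabczyk in the UMD type $2$ setting, as developed in \cite{vNVW08} and already used in the proof of \cite[Thm.~4.10]{KS21JEE}. By Lemma~\ref{lem:G2prop}(2), $G$ maps into $\gamma(E_2,E_p^{-\kG})$ with $\kG\in(\tfrac14,\tfrac12)$, and by Assumption~\ref{assum:main}(7) together with $X\in V_{T,Kq}$, we have $G(\cdot,X(\cdot))\in L^q(\Omega;L^\infty(0,T;\gamma(E_2,E_p^{-\kG})))$ (since the growth exponent $k/K\le 1$ gives the required control). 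The Burkholder--Davis--Gundy inequality for stochastic convolutions in UMD type $2$ Banach spaces then yields that $S\diamond G(\cdot,X(\cdot))\in L^q(\Omega;C^\lambda([0,T];E_p^\eta))$ provided
\begin{equation}
\lambda+\eta+\kG<\tfrac12-\tfrac1q.
\end{equation}
Choosing $\kG$ arbitrarily close to $\tfrac14$, this is exactly the condition \eqref{eq:thmHolderreglaeta}. The lower bound $\lambda+\eta>\tfrac{1}{2p}$ ensures that $E_p^{\lambda+\eta}\hookrightarrow E^c$ via Corollary~\ref{cor:fractionalspaceincl}, so the bootstrap in $V_{T,Kq}$ makes sense with initial value in $E_p^{\lambda+\eta}$. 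Summing the four estimates gives the claimed Hölder regularity.
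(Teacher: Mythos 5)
Your proposal follows essentially the same route as the paper: the same four-term decomposition of the mild solution, the embedding $E_p^{\lambda+\eta}\hookrightarrow B$ to upgrade the initial datum, re-running the existence theorem at integrability level $Kq>4$ to control the polynomial growth of $F$, the deterministic convolution estimates of \cite[Lem.~3.6]{vNVW08} for $F$ and $\widetilde F$ (with $\kF=\tfrac12$), and the factorization estimate for the stochastic convolution with $\kG$ taken close to $\tfrac14$, which is exactly where the restriction $\lambda+\eta<\tfrac14-\tfrac1q$ arises. The argument is correct; the only (harmless) imprecisions are the slightly loosened thresholds you state for the two deterministic convolution terms, which in the paper carry the extra $-\tfrac1q$ and $-\theta$ corrections but are in any case dominated by the standing hypothesis.
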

\begin{proof}
Using the continuous embedding \eqref{eq:EthetainclB}, we have that
\begin{equation}\label{eq:xiLq}
\xi\in L^{Kq}(\Omega;B)
\end{equation}
holds. Since $Kq>4$, by Remark \ref{rem:SCPnsolcont0TB} there exists a global mild solution
\begin{equation}\label{eq:XinB}
X\in L^{Kq}(\Omega; C([0,T],B)).
\end{equation}
This solution satisfies the following implicit equation (see \eqref{eq:mildsol}):
\begin{equation}\label{eq:proofmildsol2}
X(t)=S(t)\xi+S\ast F(\cdot,X(\cdot))(t)+S\ast \widetilde{F}(\cdot,X(\cdot))(t)+S\diamond G(\cdot,X(\cdot))(t),
\end{equation}
where $S$ denotes the semigroup generated by $A_p$ on $E_p$, $\ast$ denotes the usual convolution, $\diamond$ denotes the stochastic convolution with respect to $W.$ We are going to estimate the $L^q(\Omega;C^{\lambda}([0,T],E_p^{\eta}))$-norm of $X$ using the triangle-inequality in \eqref{eq:proofmildsol2}. 

For the first term we obtain in the same way as in the proof \cite[Thm.~3.14]{KS21}
\begin{equation}\label{eq:Sxi}
\mathbb{E}\|S(\cdot)\xi\|_{C^{\lambda}([0,T],E_p^{\eta})}^q \leq c\cdot \mathbb{E}\|(-A_p)^{\lambda+\eta}\xi\|^q_{E_p}<\infty
\end{equation}
by assumption.

To estimate the $q$th power of the second term
\[\mathbb{E}\|S\ast F(\cdot,X(\cdot))\|_{C^{\lambda}([0,T],E_p^{\eta})}^q\]
we choose $\theta>\frac{1}{2p}$ such that
\[\lambda+\eta+\theta<1-\frac{1}{q}.\]
Using \cite[Lem.~3.6]{vNVW08} with this $\theta$, $\alpha=1$, and $q$ instead of $p$, and obtain that there exist constants $C\geq 0$ and $\ve>0$ such that
\begin{equation}\label{eq:est2nd1}
\|S\ast F(\cdot,X(\cdot))\|_{C^{\lambda}([0,T],E_p^{\eta})}\leq CT^{\ve}\|F(\cdot,X(\cdot))\|_{L^q(0,T;E_p^{-\theta})}.
\end{equation}
We have to estimate the expectation of the $q$th power on the right-hand-side of \eqref{eq:est2nd1}. By Corollary \ref{cor:fractionalspaceincl} we obtain
\[E^c\hookrightarrow E_p\hookrightarrow E_p^{-\theta},\]
since $\theta>\frac{1}{2p}$ holds and $(\omega'-A_p)^{-\theta}$ is an isomorphism between $E_p^{-\theta}$ and $E_p$ for any $\omega'>0$. Using this and Assumptions \ref{assum:main}(5) with $K$ (which holds by the proof of Theorem \ref{theo:SCPnsolcontB}), we have
\begin{align}
\mathbb{E}\|F(\cdot,X(\cdot))\|^q_{L^q(0,T;E_p^{-\theta})}&=\mathbb{E}\int_0^T\|F(s,X(s))\|^q_{E_p^{-\theta}}\ds\\
& \lesssim \mathbb{E}\int_0^T\|F(s,X(s))\|^q_{E^c}\ds\\
& \lesssim \mathbb{E}\int_0^T(1+\|X(s)\|^{Kq}_{E^c})\ds\\
& \lesssim 1+\mathbb{E}\sup_{t\in[0,T]}\|X(t)\|^{Kq}_{B},
\end{align}
where $\lesssim$ denotes that the expression on the left-hand-side is less or equal to a constant times the expression on the right-hand-side.
In the last inequality we could write the $B$-norm of $X(t)$ since by \eqref{eq:XinB}, we have $X(t)\in B$ and the $E^c$-norm coincides with the $B$-norm on $B$, see \eqref{eq:normB}.

This implies that for each $T>0$ there exists $C_T>0$ such that
\begin{equation}\label{eq:SastF}
\left(\mathbb{E}\|S\ast F(\cdot,X(\cdot))\|_{C^{\lambda}([0,T],E_p^{\eta})}^q\right)^{\frac{1}{q}}
\leq C_T\cdot \left(1+\|X(t)\|_{L^{Kq}(\Omega; C([0,T],B))}^{K}\right),
\end{equation}
and the right-hand-side is finite by \eqref{eq:XinB}.

To estimate the $q$th power of the third term
\[\mathbb{E}\|S\ast \widetilde{F}(\cdot,X(\cdot))\|_{C^{\lambda}([0,T],E_p^{\eta})}^q\]
we use that by assumption, $\frac{1}{2}>\frac{1}{2p}$ and
\[\lambda+\eta+\frac{1}{2}<1-\frac{1}{q}\]
hold. Applying \cite[Lem.~3.6]{vNVW08} with $\theta=\frac{1}{2}$, $\alpha=1$, and $q$ instead of $p$, and obtain that there exist constants $C\geq 0$ and $\ve>0$ such that
\begin{equation}\label{eq:est3rd1}
\|S\ast \widetilde{F}(\cdot,X(\cdot))\|_{C^{\lambda}([0,T],E_p^{\eta})}\leq CT^{\ve}\|\widetilde{F}(\cdot,X(\cdot))\|_{L^q(0,T;E_p^{-\frac{1}{2}})}.
\end{equation}
We will use that by the proof of Theorem \ref{theo:SCPnsolcontB}, Assumptions \ref{assum:main}(6) holds with $\kF=\frac{1}{2}$. Hence, we obtain
\begin{align}
\mathbb{E}\|\widetilde{F}(\cdot,X(\cdot))\|^q_{L^q(0,T;E_p^{-\frac{1}{2}})}&=\mathbb{E}\int_0^T\|\widetilde{F}(s,X(s))\|^q_{E_p^{-\frac{1}{2}}}\ds\\
& \lesssim \mathbb{E}\int_0^T(1+\|X(s)\|^{q}_{E^c})\ds\\
& \lesssim 1+\mathbb{E}\|X(t)\|^{q}_{C([0,T],B))},
\end{align}
where we have used \eqref{eq:XinB} again. This implies that for each $T>0$ there exists $C'_T>0$ such that
\begin{equation}\label{eq:SasttildeF}
\left(\mathbb{E}\|S\ast \widetilde{F}(\cdot,X(\cdot))\|_{C^{\lambda}([0,T],E_p^{\eta})}^q\right)^{\frac{1}{q}}
\leq C'_T\cdot \left(1+\|X(t)\|_{L^{Kq}(\Omega; C([0,T],B))}\right)^K,
\end{equation}
and the right-hand-side is finite by \eqref{eq:XinB}.

To estimate the stochastic convolution term in \eqref{eq:proofmildsol2} we first fix $0<\alpha<\frac{1}{2}$ such that 
\[\lambda+\eta+\frac{1}{4}<\alpha-\frac{1}{q}\] 
holds. We now choose $\kappa_G\in(\frac{1}{4},\frac{1}{2})$ such that
\[\lambda+\eta+\kappa_G<\alpha-\frac{1}{q}\]
is satisfied. 

In the following we proceed similarly as done in the proof of \cite[Thm.~3.14]{KS21}. Using that Assumptions \ref{assum:main}(7) holds by the proof of Theorem \ref{theo:SCPnsolcontB}, we obtain
\begin{align}
\mathbb{E}\left\|S\diamond G(\cdot,X(\cdot))\right\|^q_{C^{\lambda}([0,T],E_p^{\eta})} &\leq T^{\ve q}\left(\int_0^T t^{-2\alpha}\dt\right)^{\frac{q}{2}}\mathbb{E}\int_0^T \left\|G(t,X(t))\right\|^q_{\gamma(E_2,E_p^{-\kappa_G})}\dt \\
&\leq T^{(\frac{1}{2}-\alpha+\ve)q}(c')^q\cdot \mathbb{E}\int_0^T\left(1+\|X(t)\|_{E^c}\right)^{\frac{k}{K}q}\dt\\
&\leq T^{(\frac{1}{2}-\alpha+\ve)q}(c')^q\cdot \mathbb{E}\int_0^T\left(1+\|X(t)\|_{B}\right)^{q}\dt\\
&\lesssim T^{(\frac{1}{2}-\alpha+\ve)q+1}(c')^q\cdot\left(1+\mathbb{E}\|X(t)\|^q_{C([0,T],B)}\right),
\end{align}
where we have used that by \eqref{eq:defkK}, $\frac{k}{K}\leq 1$ and the $E^c$- and $B$-norms coincide on $B$.
Hence, for each $T>0$ there exists constant $C''_{T}>0$ such that
\begin{equation}\label{eq:SdiamondG}
\left(\mathbb{E}\left\|S\diamond G(\cdot,X(\cdot))\right\|^q_{C^{\lambda}([0,T],E_p^{\eta})}\right)^{\frac{1}{q}}
\leq C''_{T}\cdot\left(1+\|X(t)\|_{L^{Kq}(\Omega; C([0,T],B))}\right)^{K}<+\infty.
\end{equation}
In summary, by \eqref{eq:Sxi}, \eqref{eq:SastF}, \eqref{eq:SasttildeF} and \eqref{eq:SdiamondG}, we obtain that 
\[X\in L^q(\Omega;C^{\lambda}([0,T],E_p^{\eta}))\] 
holds, hence the proof is completed.
\end{proof}

\begin{rem}
Our assumptions on the polynomials $f_{\me}$ are satisfied e.g.~for the functions coming from the classical FitzHugh-Nagumo problem (see e.g. \cite{BMZ08})
\begin{equation}\label{eq:FHN}
f_{\me}(\eta)\coloneqq \eta(\eta-1)(a_{\me}-\eta),\quad {\me}\in\mE
\end{equation}
with $a_{\me}\in (0,1)$, and also for the Allen--Cahn type nonlinearities
\begin{equation}\label{eq:AC}
f_{\me}(\eta)\coloneqq -\eta^3+\beta_{\me}^2\eta,\quad {\me}\in\mE
\end{equation}
for some (positive) constants $\beta_{\me}$ (see \cite{KS21}). Using the theory from \cite{KS21} we wouldn't have been able to treat nonlinarities of the form \eqref{eq:FHN} at all. Even to handle \eqref{eq:AC}, the problem had to be rewritten in a tricky way, see \cite[(3.33)]{KS21}.\\
The theory developed here makes possible to treat the above cases directly, and even much more general nonlinearities. We could set e.g.~a $3th$ degree polynomial on one edge and a $5th$ degree one on an other edge, satisfying Assumptions \ref{assum:fj}.
\end{rem}

We remark that in equation (\ref{eq:stochnet}a) we could have prescribed coloured noise instead of white noise on the edges of the graph, see \cite[Sec.~3.4]{KS21}. That is, we could set
\begin{equation}\label{eq:stochnetmod}
\left\{\begin{aligned}
\dot{u}_{\me}(t,x) & =  (c_{\me} u_{\me}')'(t,x)+d_{\me}(x)\cdot u_{\me}'(t,x)&\\
&-p_{\me}(x) u_{\me}(t,x)+f_{\me}(t,x,u_{\me}(t,x))&\\
&+ h_{\me}(t,x,u_{\me}(t,x))R_{\me}\frac{\partial w_{\me}}{\partial t}(t,x), &x\in (0,\ell_{\me}),\; t\in(0,T],\; \me\in\mE,
\end{aligned}
\right.
\end{equation}
with $R_{\me}\in \gamma (L^2(0,\ell_{\me}),L^p(0,\ell_{\me}))$. Then we define
\[R\coloneqq \diag (R_{\me})_{\me\in\mE}\in \gamma (E_2,E_p)\]
with 
$p\geq 2$ arbitrary. Using this, we can define the operator $G:[0,T]\times E^c\to \gamma(E_2,E_p)$ as
\[G(t,u)y\coloneqq \Gamma(t,u)Ry,\quad y\in E_2,\]
where the operator $\Gamma:[0,T]\times E^c\to \mathcal{L}(E_2)$ is defined in \eqref{eq:Gammadef}. It is easy to see that $G$ satisfies Assumptions \ref{assum:main}(7) with $\kappa_G=0$. 
%For example, if $u,v\in B$ with $\|u\|,\|v\|\leq r$, then
%\begin{align}
%\|G(t,u)-G(t,v)\|_{\gamma (H,E_p)}&\leq \|\Gamma(t,u)-\Gamma(t,v)\|_{\mathcal{L}(E_p)}\cdot\|R\|_{\gamma (H,E_p)}\\
%&\leq L^{(r)}\cdot\|u-v\|_{B}\cdot\|R\|_{\gamma (H,E_p)}
%\end{align}
%where $L^{(r)}$ is the maximum of the Lipschitz-constants of the functions $g_j$ on the ball of radius $r$.

If we replace (\ref{eq:stochnet}a) with \eqref{eq:stochnetmod}, Theorem \ref{theo:SCPnsolcontB} remains true as stated; that is, for $q>4$, but one may use a simpler Hilbert space machinery; that is, one may set $p=2$ in the proof. However, in the coloured noise case, Theorem \ref{theo:SCPnsolcontB} is true also for $q>2$. But this can only be shown by choosing $p>2$ large enough in the proof and hence, in this case, the Banach space arguments are crucial.

In Theorem \ref{theo:Holderreg}, if one takes $p=2$ (Hilbert space) and $q>4$, then the statement is true for  $\la+\eta>\frac14$ with
\begin{equation}\label{eq:thmHolderreglaetamod}
\la+\eta<\frac12-\frac1q
\end{equation}
instead of \eqref{eq:thmHolderreglaeta}. 
In this case $R$ will be a Hilbert-Schmidt operator whence the covariance operator of the driving process is trace-class. However, the statement of the theorem remains true for $q>2$ as well assuming \eqref{eq:thmHolderreglaetamod} instead of \eqref{eq:thmHolderreglaeta}, but only for the Banach space $E_p$ for $p$ large enough so that $\la+\eta>\frac{1}{2p}$.

\end{document}